\documentclass[a4paper,11pt]{article}

\usepackage[margin=3cm,footskip=1cm]{geometry}
\usepackage{amsmath,amssymb,amsthm,mathtools,bm,dsfont,mathrsfs,amsfonts}
\usepackage{microtype}

\usepackage{graphicx}
\usepackage{tikz}
\usepackage{booktabs}
\usepackage{xcolor}

\usepackage{enumitem}
\usepackage{url}
\usepackage[above,below,section]{placeins}

\usepackage{marginnote}

\makeatletter
\newcommand{\margin}[1]{%
  \ifmmode
    \marginnote{#1}%
  \else
    \marginpar{#1}%
  \fi
}
\makeatother

\usepackage[colorlinks=true,linkcolor=blue,citecolor=blue,urlcolor=blue]{hyperref}
\usepackage[numbers,sort&compress]{natbib}
\usepackage{cleveref}

\numberwithin{equation}{section}

\theoremstyle{plain}
\newtheorem{theorem}{Theorem}[section]

\theoremstyle{definition}
\newtheorem{definition}[theorem]{Definition}

\newtheorem{remark}[theorem]{Remark}
\newtheorem{example}[theorem]{Example}

\newcommand{\e}{\mathrm{e}}
\newcommand{\dd}{\mathrm{d}}
\newcommand{\PP}{\mathbb{P}}
\newcommand{\EE}{\mathbb{E}}

\title{Matrix Representations for Scale Functions of Spectrally Negative Lévy Processes with Rational Jumps}
\author{Osvaldo Angtuncio Hern\'andez, Oscar Peralta}
\date{}

\begin{document}
\maketitle

\begin{abstract}
For a spectrally negative L\'evy process with Laplace transform $\psi$, the $q$-scale function is characterized as the function whose Laplace transform is $(\psi(\cdot)-q)^{-1}$. 
It has applications in fluctuation theory, for example, exit problems and  first hitting probabilities. 
It is also used in areas like ruin theory, risk theory, continuous state branching processes and optimal control. 
In this paper, we extend the scale function representation of \cite{Ivanovs2021} from spectrally negative Lévy processes with phase-type jumps to the general case of matrix-exponential jumps. The extension is non-trivial because the probabilistic arguments employed by Ivanovs rely on an embedding to a Markov-modulated Brownian motion, a framework that does not accommodate the algebraic generality of matrix-exponential distributions. 
We overcome this limitation by embedding the L\'evy process into a stochastic fluid process modulated by a rational arrival process (RAP), a class of continuous-valued Markov processes driven by orbit processes.
This approach yields iterative schemes related to those of \cite{Ivanovs2021} to provide a simple and explicit formula for the scale function. Our method gives the same fixed point when restricted to the phase-type case, and demonstrates the utility of orbit representations in analytical problems beyond the phase-type setting.
\end{abstract}

\textit{Keywords: Matrix-Exponential Distributions; L\'evy processes; Scale Functions; Phase-Type Distributions; Orbit Processes; Rational Arrival Processes; Stochastic Fluid Processes.}

\tableofcontents
\newpage

\section{Introduction}

The scale function of a spectrally negative Lévy process (snLp) is a fundamental object in fluctuation theory, providing explicit expressions for two-sided exit problems, first passage probabilities, overshoot distributions, occupation measures, and numerous other quantities of interest in ruin theory, queueing, risk theory, continuous state branching processes, fragmentation processes, L\'evy processes conditioned to stay positive, optimal control and optimal stopping (see  \citep{KuznetsovKyprianouRivero2013} and the references therein; see also \cite{AvramPalmowskiPistorius2007,MR1988433,MR2023021,MR1175272,MR1053243,MR410961,MR2164035}). 
While scale functions exist for general spectrally negative processes, tractable representations emerge when the jump distribution has a rational Laplace transform \cite{lewis2008wiener,KuznetsovKyprianouRivero2013}. This rationality condition is equivalent to the negative jump distribution being matrix-exponential, see \cite[Theorem 4.1.10]{BladtNielsen2017}.

Matrix-exponential distributions \citep{Cox1955} form a dense subset of distributions on the positive half-line and are characterized by densities of the form $f(x) = \bm{\alpha} \e^{\bm{T}x} \bm{t}$ for $x \ge 0$, where $\bm{\alpha}$ is a row vector, $\bm{T}$ is a square matrix, and $\bm{t}$ is a column vector. 
Representations $(\bm{\alpha}, \bm{T}, \bm{t})$ may have complex entries, but can always be transformed to real representations satisfying $\bm{\alpha}\bm{1} = 1$ and $\bm{t} = -\bm{T}\bm{1}$, where $\bm{1}$ denotes a column vector of ones of appropriate size \citep{asmussen1996renewal}.
The entries of these objects can be positive or negative. 
Nevertheless, the assumption on the  representation $(\bm{\alpha}, \bm{T}, \bm{t})$ is that $f$ is a well-defined density.
These distributions strictly generalize the class of phase-type distributions \citep{Neuts1981}, which arise when $\bm{T}$ is a subgenerator (that is, off-diagonal entries nonnegative, row sums nonpositive). In this case the representation admits a Markovian interpretation (see Section \ref{sectionMEAndPHDns} for such a description). We refer to \cite{BladtNielsen2017} for a comprehensive modern treatment of matrix-exponential and phase-type distributions.

The work of \cite{Ivanovs2021} establishes an elegant representation of the scale function for spectrally negative Lévy processes with phase-type jumps. Prior to this work, scale functions for phase-type jump processes were computed via root-finding algorithms \citep{asmussen2004russian,pistorius2006maxima,egami2014phase,asmussen2014levy}. Ivanovs shows that the scale function can be expressed in terms of a transition rate matrix $G$ solving a quadratic matrix equation, with the solution computable via a fast monotone iterative scheme that offers significant computational advantages, particularly when the number of phases is large.

Our objective is to extend these results to the case of general spectrally negative Lévy processes with matrix-exponential jumps. This extension is far from routine. The probabilistic framework employed by Ivanovs fundamentally relies on embedding the Lévy process into a Markov-modulated Brownian motion \citep{ivanovs2010markov,d2012two}, a construction that requires the jump distribution to admit a phase-type representation with an underlying continuous-time Markov chain. In general, matrix-exponential distributions that are not phase-type lack such a Markovian structure (although some examples can be found in \cite{peralta2023markov}), and the probabilistic arguments used by Ivanovs cannot be directly applied. This difficulty is closely related to the broader problem of understanding when rational or matrix-exponential objects admit a genuinely Markovian representation; see \cite{asmussen2022ph,telek2022two,peralta2026rational} for recent discussion.

The key insight that enables our extension comes from recent developments in the theory of stochastic fluid processes modulated by a rational arrival process (RAP) introduced by \cite{BeanNguyenNielsenPeralta2022}. These processes substitute the Markov-modulated Brownian motion of \citep{Ivanovs2021}, allowing us to obtain a broader result, and at the same time using a simpler embedding. RAP-modulated stochastic fluid processes are driven by orbit processes, a class of piecewise deterministic Markov processes that extend continuous-time Markov chains to accommodate matrix-exponential interarrival times and other algebraic constructs   \cite{AsmussenBladt1999,bean2010quasi,BuchholzTelek2013}, without requiring finite state-space Markovian modulation. 
In simple words, the RAP-modulated stochastic fluid process is composed of a pair $(R,\bm{A}):=\{R_t,\bm{A}_t\}_{t\geq 0}$, which are a level process and an orbit process, respectively. The evolution of the level process at each time $t$, is governed by the value of the  modulator $\bm{A}_t$. 
Orbit processes provide the natural state space for rational arrival processes, which generalize Markovian arrival processes  in the same way that matrix-exponential distributions generalize phase-type distributions.
Markovian arrival processes are defined using a Markov process $\{J_t\}$ with $p<\infty$ states, acting as a modulator, and such that at time intervals where $J_t=i$, the arrivals are Poisson at rate $\beta_i$. In addition, there is a certain probability that an arrival occurs at a jump from $i$ to $j\neq i$.

An underutilized aspect of the classical stochastic fluid process framework is the systematic correspondence between one-sided exit problems for Markov-modulated Brownian motion and those for stochastic fluid processes, established through the Wiener-Hopf factorization of Brownian states \cite{NguyenPeralta2019}. Adapting the technique of the latter allows results derived in the Brownian setting with rational jumps to be translated directly to the RAP-modulated fluid process context without the need to establish a separate study. The hitting probabilities we require can be expressed through Riccati matrix equations studied by \cite{BeanNguyenNielsenPeralta2022}, which yield iterative schemes related to those of Ivanovs, converging to the same fixed point in the phase-type case.

Our contribution can be understood as a matrix-exponential extension of Ivanovs' result. We demonstrate that the transition from phase-type to matrix-exponential distributions, while losing several convenient probabilistic interpretations, preserves the essential algebraic structure needed to compute scale functions. The orbit framework provides the appropriate setting for this generalization, extending the toolkit of analytical methods available for matrix-exponential models.

Another important contribution of our paper stems from the embedding approach. While Ivanovs embeds the L\'evy process into a Markov-modulated Brownian motion (MMBM), our approach avoids working with diffusive paths altogether. Instead, we embed the process into a RAP-modulated stochastic fluid process, whose paths are piecewise linear, and apply the Wiener-Hopf factorization of Brownian paths over exponentially-distributed horizons to reduce the Brownian component to an equivalent non-diffusive problem. This reduction is exact, not a limiting approximation, and is precisely analogous to how \cite[Section 4]{NguyenPeralta2019} study MMBM via stochastic fluid processes. As a result, the only information required from the trajectories of the L\'evy process, are the values just before and after each jump, together with the infimum on each inter-jump interval.

A key advantage of matrix-exponential distributions over phase-type distributions concerns the minimality of representations. For a distribution with a rational Laplace transform of degree $p$, a matrix-exponential representation of order $p$ is always achievable and is minimal by definition, whereas minimal phase-type representations remain an open problem and in general require a number of phases strictly larger than $p$ \cite{o1990characterization}. Consequently, working with matrix-exponential representations yields numerically more attractive algorithms, as the representation dimension is as small as theoretically possible. Beyond parsimony, recent interest in concentrated matrix-exponential distributions, which serve as lower-dimensional alternatives to Erlang distributions in approximating the Dirac delta measure, has renewed attention to the class of matrix-exponential distributions that are not phase-type \cite{horvath2016concentrated,horvath2020high,horvath2020numerical,meszaros2022concentrated}. 

We emphasize that phase-type and matrix-exponential distributions share the same analytic tractability, admitting closed-form expressions for densities, distribution functions, and Laplace transforms, which facilitate explicit numerical evaluation. As our main result demonstrates, computations such as differentiation and integration of the scale function can be performed analytically, thereby reducing numerical error in computation.

The remainder of the paper proceeds as follows. 
Section~\ref{sec:matrix_exponential_distributions} defines the main objects of interest, the L\'evy process, its scale function, and the matrix-exponential distribution. 
The main results are also stated here.
Section~\ref{sectionRationalToME} establishes the constructive procedure for obtaining representations of matrix-exponential distributions with rational Laplace transforms. Section~\ref{sec:orbit_processes_and_rap_modulated_stochastic_fluid_processes} introduces orbit processes and RAP-modulated stochastic fluid processes, establishing the framework for our embedding construction. Section~\ref{sec:main_result_scale_function_representation} contains the proofs of the main results, deriving the scale function representation of the spectrally negative L\'evy process, for both bounded and unbounded variation cases.
The iterative algorithm to obtain the parameters necessary for the scale function representation is given in Section~\ref{sec:main_result_scale_function_representation}.
Finally, Section~\ref{sec:conclusion} concludes with remarks on the significance of the extension and directions for future work.

\section{Preliminaries and Main Results}\label{sec:matrix_exponential_distributions}
Here we review some of the basics on the two main objects of this manuscript: Lévy processes and matrix-exponential distributions. Most of this information can be found in \cite{Kyprianou2014,BladtNielsen2017}.
\subsection{Spectrally Negative Lévy Processes with Rational Jumps}
Consider a spectrally negative Lévy process $X$, that is, a continuous-time process starting at zero with independent and stationary increments, having paths continuous to the right and with left limits, and with only negative jumps. By the Lévy-Khintchine representation, its Laplace exponent $\psi(\theta): = \log \EE[\e^{\theta X_1}]$, can be written as
\begin{equation}\label{eq:levy_khintchine}
\psi(\theta) = c\theta + \frac{1}{2}\sigma^2\theta^2 + \int_{0}^\infty\big(\e^{-\theta x} - 1+\theta x{\bf 1}(x\leq 1)\big)\Pi(\dd x),\qquad \mbox{$\theta\geq 0$,}
\end{equation}
where $c \in \mathbb{R}$, $\sigma \ge 0$, and $\Pi$ is a Lévy measure on $(0,\infty)$. When $\Pi$ has finite positive mass, say $\Pi(0,\infty) = \lambda$, the process has compound Poisson jumps with rate $\lambda$  and jump size distribution $F(\dd x) = \Pi(\dd x)/\lambda$. The Laplace exponent then takes the form
\begin{equation}\label{eq:psi_compound_poisson}
\psi(\theta) = d\theta + \frac{1}{2}\sigma^2\theta^2 + \lambda\int_{0}^\infty\big(\e^{-\theta x}-1\big) F(\dd x),\qquad \mbox{ $\theta\geq 0$},
\end{equation}where $d\in \mathbb{R}$ is the drift in the representation
\begin{equation}\label{eqnXTrajectories}
X_t=dt+\sigma B_t-\sum_{j=1}^{N_t}C_j,
\end{equation}with $\{C_j\}_j$  i.i.d. having distribution $F$, and $N$ is a Poisson process with rate $\lambda$.
We say that $X$ has \emph{rational jumps}  if the jump size distribution $F$ has finite positive mass and a rational Laplace-Stieltjes transform:
\begin{equation}\label{eq:rational_LST}
L(\theta) := \int_0^\infty \e^{-\theta x} F(\dd x) = \frac{b_1 + b_2\theta + \cdots + b_p\theta^{p-1}}{\theta^p + a_1\theta^{p-1} + \cdots + a_{p-1}\theta + a_p},\qquad \mbox{$\theta\geq 0$,}
\end{equation}
where $a_1, \ldots, a_p, b_1, \ldots, b_p \in \mathbb{R}$, the numerator and denominator have no common factors, and $\deg(\text{numerator}) < \deg(\text{denominator}) = p\in \mathbb{N}$. 
In this case we call $p$ the order of the rational Laplace-Stieltjes transform.
The assumption of the $a_i$'s and $b_i$'s is such that the RHS of \eqref{eq:rational_LST} is well-defined and nonnegative for all $\theta\geq 0$. 
Since $L(\theta)$ is a Laplace-Stieltjes transform, we have $L(0) = 1$, which implies $b_1 = a_p \neq 0$. The Laplace exponent of $X$ then becomes
\begin{equation}\label{eq:psi_rational}
\psi(\theta) = d\theta + \frac{\sigma^2}{2}\theta^2 + \lambda(L(\theta) - 1) = d\theta + \frac{\sigma^2}{2}\theta^2 + \lambda\left(\frac{b_1 + b_2\theta + \cdots + b_p\theta^{p-1}}{\theta^p + a_1\theta^{p-1} + \cdots + a_p} - 1\right).
\end{equation}
We are interested in the case where the $C_j$'s have a matrix-exponential distribution, which is described in the next section.
But first, we describe the scale function, which is the core function to analyze in this paper. 

For any $q\geq 0$, 
the $q$-scale function $W^{(q)} : \mathbb{R} \to [0,\infty)$ is characterized by its Laplace transform:
\begin{equation}\label{eq:scale_transform}
\int_0^\infty \e^{-\theta x} W^{(q)}(x)\,\dd x = \frac{1}{\psi(\theta) - q}, \qquad \theta > \Phi_q,
\end{equation}
where $\Phi_q \ge 0$ is the largest root of $\psi(\theta) = q$.
For $x<0$ it holds that $W^{(q)}(x)=0$.

\subsection{Matrix-Exponential and Phase-Type Distributions}\label{sectionMEAndPHDns}
Distributions with rational Laplace-Stieltjes transforms admit a matrix representation that facilitates both analytical and computational work. We now introduce this representation.
\begin{definition}
A distribution on $(0,\infty)$ is called \emph{matrix-exponential} (ME) if its density can be written as
\begin{equation}\label{eq:ME_density}
f(x) = \bm{\alpha} \e^{\bm{T}x} \bm{t}, \qquad x > 0,
\end{equation}
where $\bm{\alpha}$ is a $1 \times p$ row vector, $\bm{T}$ is an $p \times p$ matrix, $\bm{t}$ is an $p \times 1$ column vector, and $p\in \mathbb{N}$. 
The triplet $(\bm{\alpha}, \bm{T}, \bm{t})$ is called a \emph{matrix-exponential representation} of order $p$.
\end{definition}
The Laplace transform of a matrix-exponential distribution admits the rational form
\begin{equation}\label{eq:ME_Laplace}
\int_0^\infty \e^{-\theta x} f(x)\,\dd x = \bm{\alpha}(\theta \bm{I} - \bm{T})^{-1}\bm{t} = \frac{P(\theta)}{Q(\theta)}, \qquad \theta\geq 0,
\end{equation}
where $P$ and $Q$ are polynomials with $\deg P < \deg Q \le p$. 
Note that the last equality in \eqref{eq:ME_Laplace} holds true since $(\theta \bm{I} - \bm{T})^{-1}=\operatorname{adj}(\theta \bm{I} - \bm{T})\det(\theta \bm{I} - \bm{T})^{-1}$, the denominator is a polynomial of degree $p$, and $\operatorname{adj}(\theta \bm{I} - \bm{T})$ is composed of all square minors of size $p-1$ obtained from $\theta \bm{I} - \bm{T}$, and thus polynomials of degree at most $p-1$. 
The parameter $p$ in a matrix-exponential representation need not be minimal; a representation is called \emph{minimal} if $\deg Q = p$.
Matrix-exponential distributions strictly generalize phase-type distributions. A distribution is phase-type (PH) if and only if it admits a matrix-exponential representation $(\bm{\alpha}, \bm{T}, \bm{t})$ where $\bm{T}$ is a subgenerator, $\bm{\alpha}$ is a probability vector ($\bm{\alpha} \ge \bm{0}$, $\bm{\alpha}\bm{1} = 1$), and $\bm{t}=-\bm{T}\bm{1}$. In this case, the representation has a Markovian interpretation: $\bm{T}$ is the transient subgenerator of a continuous-time Markov chain on $p$ transient states, $\bm{\alpha}$ is the initial distribution, and $\bm{t}$ specifies the absorption rates to a unique absorbing state.
We call such a continuous-time Markov chain the modulator Markov process of the PH distribution, or simply the modulator. 
Note that, since for matrix-exponential distributions the entries of the representation $(\bm \alpha,\bm T,\bm t)$ can be negative, such Markovian interpretation in general is still an open problem (see \cite{peralta2023markov}). 
Concentrated matrix-exponential distributions are particularly useful as they can efficiently approximate deterministic delays and other low-variance phenomena using significantly fewer dimensions than standard phase-type representations (such as Erlang distributions) \cite{horvath2016concentrated,horvath2020high,horvath2020numerical,meszaros2022concentrated}.

\subsubsection{Main results}

Following \cite{Ivanovs2021}, we employ the identity
\begin{equation}\label{eq:Ivanovs_identity}
W^{(q)}(x) = \frac{1}{\psi'(\Phi_q)} \Big( \e^{\Phi_q x} - \PP(\tau_{\{-x\}} < e_q) \Big), \qquad x \ge 0,
\end{equation}
where $e_q$ is an independent exponential time with rate $q$ (with the convention $e_0=\infty$ almost surely), and        
$\tau_{\{-x\}}=\inf\{t>0:X_t=-x\}$ is the first hitting time of level $-x$ (see \cite{MR2126965,MR2946445,KuznetsovKyprianouRivero2013}). From the above, our task reduces to computing $\PP(\tau_{\{-x\}} < e_q)$. 

\begin{theorem}\label{teoScaleFnBV}
Let $X$ be a L\'evy process as in \eqref{eqnXTrajectories}, with $\sigma=0$ and $d>0$. Then, for any $x\geq 0$ and $q\geq 0$ such that $\psi'(\Phi_q)>0$ we have
\begin{equation}\label{eq:scale_bv}
W^{(q)}(x) = \frac{1}{\psi'(\Phi_q)} \Big( \e^{\Phi_q x} - \bm{\Psi}\, e^{\bm{G}x}\,\bm{\nu} \Big), \qquad x \ge 0,
\end{equation}where $\bm{G}: = \bm{T} + \bm{t}\,\bm{\Psi}$, $\bm{\nu}:=(\Phi_q \bm I - \bm{T})^{-1}\bm{t}$ with $\bm I$ the identity matrix.
The matrix $\bm{\Psi}$ is a solution to 
\begin{equation}\label{eq:Psi_bv}
\bm{\Psi} \big( (\lambda + q)\bm{I} - d\bm{T} - d\,\bm{t}\,\bm{\Psi} \big) = \lambda\, \bm{\alpha}.
\end{equation}Finally, $\bm{\Psi}$ can be obtained as the limit of the recursive equation
\begin{equation}\label{eq:iteration_bv1}
\bm{\Psi}_n
:=
\Big(\frac{\lambda}{d}\bm{\alpha} + (\bm{\Psi}_{n-1}\bm{t})\bm{\Psi}_{n-1}\Big)
\left(\frac{\lambda + q}{d}\bm I - \bm{T}\right)^{-1},
\end{equation}with  $\bm{\Psi}_0=\bm{0}$.
\end{theorem}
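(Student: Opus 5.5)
The plan is to combine identity \eqref{eq:Ivanovs_identity} with a ladder (renewal) decomposition of the undershoot below $-x$. All that then remains is to show
\[
\PP(\tau_{\{-x\}}<e_q)=\bm{\Psi}e^{\bm{G}x}\bm{\nu}.
\]
Since $\sigma=0$ and $d>0$, $X$ has bounded variation and cannot creep downward. Hence $-x$ can only be hit after $X$ first jumps strictly below $-x$, to a level $-x-U_x$, and then drifts back up to $-x$. The Lévy process has no positive jumps. So by the strong Markov property at $\tau_x^-:=\inf\{t:X_t<-x\}$ and the memorylessness of $e_q$, the upward return has probability $\e^{-\Phi_q U_x}$. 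This gives
\[
\PP(\tau_{\{-x\}}<e_q)=\EE\big[\e^{-\Phi_q U_x};\,\tau_x^-<e_q\big].
\]
The work therefore reduces to identifying the killed law of the undershoot $U_x$ in matrix form.

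Step one treats $x=0$. I will define the row vector $\bm{\Psi}$ by
\[
\EE\big[\e^{-\Phi_q U_0}\cdots\big]\quad\text{through}\quad \PP(\tau_0^-<e_q,\,U_0\in\dd y)=\bm{\Psi}e^{\bm{T}y}\bm{t}\,\dd y.
\]
That is, $\bm{\Psi}$ is the killed "orbit" of the jump at the moment it overshoots $0$. The existence of such a representation follows from the ME structure: conditionally on the pre-jump level $z$, the part of a jump $C$ exceeding $z$ has defective density $\bm{\alpha}e^{\bm{T}z}e^{\bm{T}y}\bm{t}$.

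Step two treats general $x$ by a level-crossing argument on the running infimum. The state below the current infimum evolves by $\bm{T}$ while a single jump is still "in progress". When that jump ends before reaching the level, a new excursion starts and contributes $\bm{t}\bm{\Psi}$. This should give the orbit $\bm{\Psi}e^{(\bm{T}+\bm{t}\bm{\Psi})x}=\bm{\Psi}e^{\bm{G}x}$ at level $-x$, and hence
\[
\PP(\tau_x^-<e_q,\,U_x\in\dd y)=\bm{\Psi}e^{\bm{G}x}e^{\bm{T}y}\bm{t}\,\dd y .
\]
Integrating against $\e^{-\Phi_q y}$ then yields $\bm{\Psi}e^{\bm{G}x}(\Phi_q\bm{I}-\bm{T})^{-1}\bm{t}=\bm{\Psi}e^{\bm{G}x}\bm{\nu}$.

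Step three derives \eqref{eq:Psi_bv}. I condition on the first jump time $s\sim$ Exp$(\lambda+q)$, where the rate includes the killing. At that time the level is $ds$, and by the previous step the orbit describing passage below $0$ from level $ds$ is $\bm{\alpha}e^{\bm{G}ds}$. Thus
\[
\bm{\Psi}=\lambda\int_0^\infty \e^{-(\lambda+q)s}\bm{\alpha}e^{\bm{G}ds}\,\dd s=\lambda\bm{\alpha}\big((\lambda+q)\bm{I}-d\bm{G}\big)^{-1}.
\]
This integral converges because $\psi'(\Phi_q)>0$ forces the eigenvalues of $d\bm{G}$ to have real parts less than $\lambda+q$. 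Expanding $\bm{G}=\bm{T}+\bm{t}\bm{\Psi}$ gives \eqref{eq:Psi_bv}. Rearranging \eqref{eq:Psi_bv} as
\[
\bm{\Psi}\Big(\tfrac{\lambda+q}{d}\bm{I}-\bm{T}\Big)=\tfrac{\lambda}{d}\bm{\alpha}+(\bm{\Psi}\bm{t})\bm{\Psi}
\]
gives exactly the fixed-point form of \eqref{eq:iteration_bv1}.

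The main obstacle is rigor in the non-phase-type case. There the "orbits" $\bm{\alpha}e^{\bm{T}z}$ are not probability vectors, so the renewal argument of step two and the monotone convergence of \eqref{eq:iteration_bv1} cannot be justified pathwise as in \cite{Ivanovs2021}. I would handle this with the RAP-modulated stochastic fluid embedding of Section~\ref{sec:orbit_processes_and_rap_modulated_stochastic_fluid_processes}. Between jumps the level increases at rate $d$, and a jump is replaced by a down-phase of slope $-1$ driven by the orbit process with parameters $(\bm{\alpha},\bm{T},\bm{t})$. Killing at rate $q$ is applied only during up-phases. In that model, first passage below a level has orbit-valued distributions satisfying exactly the Riccati equations of \cite{BeanNguyenNielsenPeralta2022}; with $\sigma=0$ no Wiener--Hopf reduction of Brownian states is needed. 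Those results give the multiplicative structure $\bm{\Psi}e^{\bm{G}x}$ and identify $\bm{\Psi}$ as the limit of the iteration started at $\bm{0}$. The iterate $\bm{\Psi}_n$ is interpreted as the contribution of paths with at most $n$ nested down-excursions, and $\psi'(\Phi_q)>0$ guarantees the selected solution is the correct (minimal) one.

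As a cross-check, and as an alternative route if the embedding details become heavy, I would verify the final formula analytically. Take the Laplace transform in $x$ of $\bm{\Psi}e^{\bm{G}x}\bm{\nu}$, namely $\bm{\Psi}(\theta\bm{I}-\bm{G})^{-1}\bm{\nu}$, and check that the resulting expression for $W^{(q)}$ has transform $1/(\psi(\theta)-q)$ using \eqref{eq:Psi_bv}. This algebraic verification needs no Markovian interpretation at all.
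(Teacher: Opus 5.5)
Your main line coincides with the paper's proof. It uses identity \eqref{eq:Ivanovs_identity}, reduces to the killed transform of the undershoot, and uses the same two-space embedding: after rescaling time in the up-phase, $\bm{C}^+=-(\lambda+q)/d$, $\bm{D}^{+-}=\frac{\lambda}{d}\bm{\alpha}$, $\bm{C}^-=\bm{T}$, $\bm{D}^{-+}=\bm{t}$, with killing only in $\mathcal{Z}^+$. It then applies the downward-record formula \eqref{eq:O_x_formula} and takes convergence of \eqref{eq:iteration_bv1} from \cite{BeanNguyenNielsenPeralta2022}. The one real variation is step three. There you obtain \eqref{eq:Psi_bv} by first-step analysis, $\bm{\Psi}=\int_0^\infty\lambda\e^{-(\lambda+q)s}\bm{\alpha}e^{d\bm{G}s}\,\dd s$, which is \eqref{eq:O_x_formula_minus} applied from orbit $\bm{\alpha}$ at level $ds$. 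The paper instead substitutes the parameters into \eqref{eq:riccati}. Your route is legitimate, but your convergence argument is only asserted. The eigenvalue claim comes without argument, and establishing it requires locating the spectrum of $\bm{G}$, which depends on which solution $\bm{\Psi}$ is. You do not need it. By \eqref{eq:O_x_formula_minus}, $\bm{\alpha}e^{\bm{G}u}$ is an expected orbit value at first passage, hence bounded in $u$ because the orbit space is compact. So the integral converges absolutely, and integrating $\frac{\dd}{\dd s}\big(\e^{-(\lambda+q)s}\bm{\alpha}e^{d\bm{G}s}\big)$ over $(0,\infty)$ gives $\bm{\Psi}\big((\lambda+q)\bm{I}-d\bm{G}\big)=\lambda\bm{\alpha}$ with no inversion.

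The genuine gap is your fallback: checking the Laplace transform ``using \eqref{eq:Psi_bv}'' cannot work, because \eqref{eq:Psi_bv} does not determine $\bm{\Psi}$. Writing $c:=\bm{\Psi}\bm{t}$, the equation reads $\bm{\Psi}\big((\lambda+q-dc)\bm{I}-d\bm{T}\big)=\lambda\bm{\alpha}$. Its solutions are therefore $\bm{\Psi}_\mu=\frac{\lambda}{d}\bm{\alpha}(\mu\bm{I}-\bm{T})^{-1}$, one for each root $\mu$ of $\psi(\mu)=q$. By Sherman--Morrison, $\bm{\Psi}(\theta\bm{I}-\bm{G})^{-1}\bm{\nu}=\bm{\Psi}(\theta\bm{I}-\bm{T})^{-1}\bm{\nu}\big/\big(1-\bm{\Psi}(\theta\bm{I}-\bm{T})^{-1}\bm{t}\big)$. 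For $\mu=\Phi_q$ this equals $\frac{1}{\theta-\Phi_q}-\frac{\psi'(\Phi_q)}{\psi(\theta)-q}$. For every other root it equals $\frac{1}{\theta-\Phi_q}$, which would give $W^{(q)}\equiv 0$.

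Already for exponential jumps ($\bm{\alpha}=1$, $\bm{T}=-\beta$), the negative root $\mu_-$ of $\psi=q$ yields a second real solution, with $\bm{G}=\Phi_q$ and $\bm{\Psi}\bm{\nu}=1$. So the algebraic check needs the extra identification $\bm{\Psi}=\frac{\lambda}{d}\bm{\alpha}(\Phi_q\bm{I}-\bm{T})^{-1}$. It also says nothing about which fixed point the iteration started at $\bm{0}$ reaches. Invoking a ``minimal'' solution is meaningless here, since $\bm{\Psi}$ may have negative entries.

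Your step one actually supplies the identification. The compensation formula, together with the $q$-potential density $\e^{-\Phi_q y}/d$ (using $W^{(q)}(0)=1/d$) of $X$ started at $0$ and killed on entering $(-\infty,0)$, gives exactly $\bm{\Psi}=\frac{\lambda}{d}\bm{\alpha}(\Phi_q\bm{I}-\bm{T})^{-1}$. More generally, steps one and two are rigorous as identities between undershoot densities. The strong Markov property at $\tau_x^-$, plus linear independence of the entries of $y\mapsto e^{\bm{T}y}\bm{t}$ (from minimality), gives a Volterra equation for the propagator whose solution is $e^{\bm{G}z}$. So, contrary to your worry, within your plan the embedding is needed only for the convergence of \eqref{eq:iteration_bv1}.
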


Observe that the iteration in \eqref{eq:iteration_bv1} is a variant of the scheme proposed by Ivanovs \cite[Section 4.2]{Ivanovs2021} for computing the limiting vector $\bm{\Psi} = \lim_{n \to \infty} \bm{\Psi}_n$ (see Remark \ref{rem:bv_iteration_diff} below). 
Indeed, the primary difference between the two schemes is that Ivanovs's approach evaluates $(\bm{\Psi}_{n-1}\bm{t})\bm{\Psi}_{n}$ rather than $(\bm{\Psi}_{n-1}\bm{t})\bm{\Psi}_{n-1}$. In the context of classical stochastic fluid processes, both iterations converge to the same fixed point. Consequently, the limiting equation serves as the matrix-exponential analogue of Ivanovs's Riccati equation.

Now we state the matrix-exponential extension of \cite[Theorem 1]{Ivanovs2021} in the unbounded variation case.

While the explicit construction of the scale function for the unbounded variation case relies on the detailed description of RAP-modulated processes developed in the subsequent sections, we present the main result here for immediate reference. Note that the algorithm provides a fully explicit computation of all required parameters.

\begin{theorem}\label{teoScaleFnUV}
Let $X$ be a L\'evy process as in \eqref{eqnXTrajectories}, with $\sigma>0$ and $d\in \mathbb{R}$. 
Consider $\bm{G}: = \bm{C}^- + \bm{D}^{-+}\bm{\Psi}$ which, under the block structure given below in \eqref{eq:params_ubv}-\eqref{eq:params_ubv1}, takes the explicit form $\bm{G} = \begin{pmatrix} -a & \bm{b} \\ \bm{t} & \bm{T} \end{pmatrix}$. Define $\bm{\nu}:=(\Phi_q \bm{I}-\bm{T})^{-1}\bm{t}$ and $\bm{V}:=\binom{1}{\bm{\nu}}$. Let $\bm{e}_1^-:=(1,0,\dots,0)^\intercal\in\mathbb{R}^{p+1}$ be the first canonical basis vector.
Then, for any $x\geq 0$ and $q\geq 0$ such that $\psi'(\Phi_q)>0$ we have
\begin{equation}\label{eq:scale_ubv}
W^{(q)}(x) = \frac{1}{\psi'(\Phi_q)}\Big(e^{\Phi_q x} - (\bm{e}_1^-)^\intercal\,e^{\bm{G}x}\,\bm{V}\Big), \qquad x \ge 0.
\end{equation}
The matrix $\bm{\Psi}$ can be 
recovered from the pair $(a,\bm{b})$ via
\begin{equation}\label{eq:Psi_from_ab}
\bm{\Psi} = \frac{1}{\omega_{\lambda+q}}\begin{pmatrix}\omega_{\lambda+q} - a & \quad \bm{b}\end{pmatrix},
\end{equation}
where $\omega_{\lambda+q} := \big(\sqrt{d^2 + 2\sigma^2 (\lambda+q)} + d\big)\sigma^{-2}$, $\eta_{\lambda+q} := \big(\sqrt{d^2 + 2\sigma^2 (\lambda+q)} - d\big)\sigma^{-2}$, and $(a,\bm b)$ are a solution to the system of equations
\begin{equation}\label{eq:ivanovs_scalar}
\sigma^2a^2-2da-2(\lambda+q)+\sigma^2\bm b\bm t=0,
\end{equation}
\begin{equation}\label{eq:ivanovs_vector}
\sigma^2\bm{b}\left(\big(a-2d\sigma^{-2}\big)\bm I-\bm{T}\right) =2\lambda\bm{\alpha}.
\end{equation}
Finally, such a pair can be obtained as the limit of the recursive equations
\begin{equation}\label{eq:b_recursion}
\bm{b}_n = \Big( \frac{2\lambda}{\sigma^2}\,\bm{\alpha} + (\omega_{\lambda+q}-a_{n-1})\bm{b}_{n-1} \Big) \big(\eta_{\lambda+q}\bm{I} - \bm{T}\big)^{-1}, \qquad n \ge 1,
\end{equation}
\begin{equation}\label{eq:a_recursion}
a_n = \omega_{\lambda+q} - \frac{\sigma^2}{2\sqrt{d^2+2\sigma^2(\lambda+q)}}\Big((\omega_{\lambda+q} - a_{n-1})^2 + \bm{b}_n\bm{t}\Big), \qquad n \ge 1,
\end{equation}
with $a_0=\omega_{\lambda+q}$, $\bm{b}_0=\bm{0}$.
\end{theorem}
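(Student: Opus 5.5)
The plan is to start from identity \eqref{eq:Ivanovs_identity}, so that everything reduces to computing $\PP(\tau_{\{-x\}}<e_q)$. The Brownian part will be removed by the Wiener--Hopf factorisation over exponential horizons, in the spirit of \cite[Section 4]{NguyenPeralta2019}. Let $\zeta\sim\mathrm{Exp}(\lambda+q)$ be the time until the next jump or killing, whichever comes first. Over $[0,\zeta)$ the path $dt+\sigma B_t$ has infimum $-I$ and terminal-minus-infimum $S$, where $I\sim\mathrm{Exp}(\eta_{\lambda+q})$ and $S\sim\mathrm{Exp}(\omega_{\lambda+q})$ are independent. At time $\zeta$ a jump occurs with probability $\lambda/(\lambda+q)$, and killing occurs otherwise. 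Since $X$ has no positive jumps and ME jumps have no atoms, a level $-x$ is hit exactly only during a continuous descent. Hence only three things matter: the values just before and after jumps, and the infima on each inter-jump interval.

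\textbf{Embedding.} I would encode this in a RAP-modulated fluid process $(R,\bm A)$ with piecewise linear paths. It has a ``creeping down'' regime (slope $-1$, exponential length of rate $\eta_{\lambda+q}$), an ``up'' regime (slope $+1$, rate $\omega_{\lambda+q}$), and a ``jump'' regime (slope $-1$, orbit driven by the ME representation $(\bm\alpha,\bm T,\bm t)$ of $C_j$). The down-orbit then lives in dimension $p+1$: the first coordinate is creeping and the remaining $p$ are jump phases. This gives the block parameters $\bm C^-$, $\bm D^{-+}$, etc.\ of \eqref{eq:params_ubv}--\eqref{eq:params_ubv1}. Let $\bm\Psi$ denote the matrix of first-return orbits: starting in the up regime, it records the orbit with which the level process first returns to its starting level from above.

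By the results of \cite{BeanNguyenNielsenPeralta2022}, the orbit at the first passage below $-x$ evolves as $(\bm e_1^-)^\intercal e^{\bm Gx}$ with $\bm G=\bm C^-+\bm D^{-+}\bm\Psi$, and $\bm\Psi$ solves the corresponding Riccati equation. Because the up regime is a single exponential, $\bm\Psi$ is a row vector, and the first row of $\bm G$ reads $(-a,\bm b)$. Solving for $\bm\Psi$ gives \eqref{eq:Psi_from_ab}. Substituting $\bm\Psi$ back into the Riccati equation and using $\omega_{\lambda+q}\eta_{\lambda+q}=2(\lambda+q)\sigma^{-2}$ and $\omega_{\lambda+q}-\eta_{\lambda+q}=2d\sigma^{-2}$, the scalar and vector components should reduce to \eqref{eq:ivanovs_scalar} and \eqref{eq:ivanovs_vector}.

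\textbf{The vector $\bm V$.} When $R$ first reaches $-x$ from above, two cases arise. If it does so in the creeping regime, $-x$ is hit, which contributes the entry $1$. If it does so inside a jump, with ME orbit $\bm a$, the residual overshoot has density $\bm a e^{\bm Ty}\bm t$, and $X$ must creep back up a distance $y$ before $e_q$. That happens with probability $e^{-\Phi_q y}$. Integrating over $y$ gives $\bm a(\Phi_q\bm I-\bm T)^{-1}\bm t=\bm a\bm\nu$. Thus $\PP(\tau_{\{-x\}}<e_q)=(\bm e_1^-)^\intercal e^{\bm Gx}\bm V$, and \eqref{eq:scale_ubv} follows.

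\textbf{The recursions.} These come from the standard fixed-point iteration for the Riccati equation with $\bm\Psi_0=\bm0$, i.e.\ $a_0=\omega_{\lambda+q}$ and $\bm b_0=\bm0$. Solving the vector equation for $\bm b_n$ yields \eqref{eq:b_recursion}, and the scalar equation then yields \eqref{eq:a_recursion}.

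\textbf{Main obstacle.} I expect the hardest step to be justifying that the iteration converges to the \emph{correct} solution. In the non-Markovian orbit setting, monotonicity arguments are not available. I would first adapt the convergence and uniqueness results of \cite{BeanNguyenNielsenPeralta2022} for the Riccati equation of RAP fluids, using the condition $\psi'(\Phi_q)>0$ to ensure the needed spectral separation. As a safeguard, I would verify the formula directly by algebra. This means showing that the Laplace transform of the right-hand side of \eqref{eq:scale_ubv} equals $1/(\psi(\theta)-q)$, using $(\theta\bm I-\bm G)^{-1}$, a Schur complement on the block form of $\bm G$, and \eqref{eq:ivanovs_scalar}--\eqref{eq:ivanovs_vector}. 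This check would confirm the identity for any solution $(a,\bm b)$ whose $\bm G$ has the appropriate eigenvalue location.
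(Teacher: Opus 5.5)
Your architecture is the paper's. You reduce to $\PP(\tau_{\{-x\}}<e_q)$ via \eqref{eq:Ivanovs_identity} and replace each inter-jump Brownian stage by its two Wiener--Hopf factors. You then embed into a RAP-modulated fluid with regimes $\mathcal Z_1^-,\mathcal Z_1^+,\mathcal Z_2^-$, apply \eqref{eq:O_x_formula_minus} from $(\bm e_1^-)^\intercal$, and split on $Z=0$ versus $Z>0$ to get $\bm V$. The problem is that you state the Wiener--Hopf step backwards.

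You give the depth of the stage infimum rate $\eta_{\lambda+q}$ and the rise from it rate $\omega_{\lambda+q}$. Accordingly, your down regime is $\mathrm{Exp}(\eta_{\lambda+q})$ and your up regime is $\mathrm{Exp}(\omega_{\lambda+q})$. But take $W_t=dt+\sigma B_t$ and $E\sim\mathrm{Exp}(\lambda+q)$:
\begin{itemize}
\item the depth $-\min_{s\le E}W_s$ is exponential with rate equal to the positive root of $\tfrac{\sigma^2}{2}\theta^2-d\theta=\lambda+q$, which is $\omega_{\lambda+q}$;
\item the rise $W_E-\min_{s\le E}W_s$ has the law of $\max_{s\le E}W_s$, whose rate is the positive root of $\tfrac{\sigma^2}{2}\theta^2+d\theta=\lambda+q$, which is $\eta_{\lambda+q}$.
\end{itemize}
As a sanity check, for large $d>0$ the dip below the start must be small, and $\omega_{\lambda+q}\approx 2d/\sigma^2$ is the large rate.

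As written, then, your embedding does not give \eqref{eq:params_ubv}--\eqref{eq:params_ubv1}, despite your claim. It gives $\bm C^-_{11}=-\eta_{\lambda+q}$, $\bm C^+=-\omega_{\lambda+q}$, $\bm D^{-+}=\binom{\eta_{\lambda+q}}{\bm 0}$ and $\bm D^{+-}=\bigl(0,\ \omega_{\lambda+q}\tfrac{\lambda}{\lambda+q}\bm\alpha\bigr)$. Carrying this through:
\begin{itemize}
\item the first row of $\bm G$ yields $\bm\Psi=\eta_{\lambda+q}^{-1}(\eta_{\lambda+q}-a,\ \bm b)$ instead of \eqref{eq:Psi_from_ab};
\item the Riccati equation yields $\sigma^2a^2+2da-2(\lambda+q)+\sigma^2\bm b\bm t=0$ and $\sigma^2\bm b\bigl((a+2d\sigma^{-2})\bm I-\bm T\bigr)=2\lambda\bm\alpha$;
\item the Sylvester recursion inverts $\omega_{\lambda+q}\bm I-\bm T$ and starts at $a_0=\eta_{\lambda+q}$.
\end{itemize}
In other words, you obtain \eqref{eq:ivanovs_scalar}--\eqref{eq:a_recursion} with $d$ replaced by $-d$, since the swap $\omega_{\lambda+q}\leftrightarrow\eta_{\lambda+q}$ is exactly $d\mapsto-d$. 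These are the first-passage quantities of the wrong process, so your step ``should reduce to \eqref{eq:ivanovs_scalar} and \eqref{eq:ivanovs_vector}'' fails. Swapping the rates back, so that $\bm D^{+-}$ carries $\eta_{\lambda+q}\tfrac{\lambda}{\lambda+q}\bm\alpha$, repairs everything and turns your argument into the paper's proof.

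Two smaller points.

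First, to land exactly on \eqref{eq:b_recursion}--\eqref{eq:a_recursion} you need the Sylvester scheme \eqref{eq:psi_sylvester}, with quadratic term $\bm\Psi_{n-1}\bm D^{-+}\bm\Psi_{n-1}$. An unspecified fixed-point iteration produces different iterates.

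Second, you need neither Riccati uniqueness (the paper notes it is open) nor $\psi'(\Phi_q)>0$ to identify the limit. What certifies it is the convergence theorem of \cite{BeanNguyenNielsenPeralta2022} together with \eqref{eq:psi_interpretation}, extended to termination as in Remark~\ref{rem:terminating_raps}. Your Laplace-transform cross-check cannot replace this on its own. Every solution of \eqref{eq:ivanovs_scalar}--\eqref{eq:ivanovs_vector} satisfies $\tfrac{2}{\sigma^2}(\psi(\theta)-q)\det(\theta\bm I-\bm T)=(\theta-a+2d\sigma^{-2})\det(\theta\bm I-\bm G)$. So the check only certifies the solution with $a-2d\sigma^{-2}=\Phi_q$, and you would still have to show that the iteration selects that one.
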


The limiting system \eqref{eq:ivanovs_scalar}--\eqref{eq:ivanovs_vector} coincides with Equations (7) of \cite{Ivanovs2021}, but the iterative algorithm \eqref{eq:b_recursion}--\eqref{eq:a_recursion} differs from the alternating scheme of \cite[Section 4.2]{Ivanovs2021}: ours is a linear iteration arising naturally from the Sylvester recursion \eqref{eq:psi_sylvester} given below of the RAP-modulated fluid framework, whereas Ivanovs's is a nonlinear fixed-point iteration on the limiting equations. Notably, the Sylvester recursion requires only a single matrix inversion of $(\eta_{\lambda+q}\bm{I} - \bm{T})$, completely avoiding the repeated sequential inversions demanded by Ivanovs's variant. When the jump distribution is phase-type, both algorithms converge to the same fixed point $(a,\bm{b})$. But only our algorithm is currently available in the RAP-modulated setting, as the richer toolkit of classical stochastic fluid models that enables Ivanovs's variant has not yet been developed for RAP-modulated processes.


\section{From Rational Laplace-Stieltjes Transforms to Matrix-Exponential Representations}\label{sectionRationalToME}

In this section we show that
every distribution with a rational Laplace-Stieltjes transform  \eqref{eq:rational_LST} is matrix-exponential. 
The latter is done via a
constructive procedure for obtaining a matrix-exponential representation from a given rational form.

\subsection{The Companion Representation}

Given the rational Laplace-Stieltjes transform \eqref{eq:rational_LST} of order $p$, we construct a companion matrix representation as follows. Define the square companion matrix
\begin{equation}\label{eq:companion_S}
\bm{S} = \begin{pmatrix}
0 & 1 & 0 & \cdots & 0 & 0 \\
0 & 0 & 1 & \cdots & 0 & 0 \\
0 & 0 & 0 & \ddots & 0 & 0 \\
\vdots & \vdots & \vdots & \ddots & \ddots & \vdots \\
0 & 0 & 0 & \cdots & 0 & 1 \\
-a_p & -a_{p-1} & -a_{p-2} &  \cdots & -a_2 & -a_1
\end{pmatrix}.
\end{equation}
Note that its characteristic polynomial is $\det(\theta \bm{I} - \bm{S}) = \theta^p + a_1\theta^{p-1} + \cdots + a_{p-1}\theta + a_p$, which follows directly by induction. 
Indeed, expanding the determinant on the first column we have $\det(\theta \bm{I} - \bm{S})=\theta \det(\theta \bm{I} - \bm{S}_{p-1})+a_p$, where $\bm{S}_{p-1}$ is the matrix $\bm{S}$ with the first row and column erased. 
Define the initial vector $\bm{\beta} = (b_1, \ldots, b_p)$ and the \emph{closing vector} of size $p\times 1$
\begin{equation}\label{eq:s_vector}
\bm{s} = \begin{pmatrix}
0 \\
0 \\
\vdots \\
0 \\
1
\end{pmatrix}.
\end{equation}
Then, defining
\begin{equation}\label{eq:companion_density}
f_R(x): = \bm{\beta} \e^{\bm{S}x} \bm{s}, \qquad x > 0,
\end{equation}one can show that its Laplace-Stieltjes transform $L_R$ satisfies \begin{equation}\label{eq:companion_LST}
L_R(\theta)=\bm{\beta}(\theta \bm{I} - \bm{S})^{-1}\bm{s} = \frac{b_1 + b_2\theta + \cdots + b_p\theta^{p-1}}{\theta^p + a_1\theta^{p-1} + \cdots + a_p},\qquad \mbox{$\theta\geq 0$}.
\end{equation}
Since a distribution on $[0,\infty)$ is uniquely characterized by its Laplace-Stieltjes transform, the required representation holds.

Note that to compute the numerator of \eqref{eq:companion_LST}, since $\bm{s}$ is the $p$-th standard basis vector, $\bm{\beta}\operatorname{adj}(\theta \bm{I} - \bm{S})\bm{s}$ extracts the dot product of $\bm{\beta}$ with the last column of the adjugate matrix. By expanding the minors, one has $(\operatorname{adj}(\theta \bm{I} - \bm{S}))_{j,p} = (-1)^{j+p}\det((\theta \bm{I} - \bm{S})_{(p,j)})$, where $(\theta \bm{I} - \bm{S})_{(p,j)}$ means the matrix $\theta \bm{I}-\bm S$ with row $p$ and column $j$ deleted. The submatrix is block triangular, and its determinant evaluates exactly to $(-1)^{p-j}\theta^{j-1}$. Hence $\bm{\beta}\operatorname{adj}(\theta \bm{I} - \bm{S})\bm{s}=\sum_{j=1}^p \beta_j\theta^{j-1}$, matching the required numerator.

If the order of the Laplace-Stieltjes transform is minimal, then the companion representation has the advantage of being unique.
However, it does not necessarily satisfy the normalization  $\bm{s} = -\bm{S}\bm{1}$ as in the phase-type setting, which is convenient for applications in fluctuation theory, as used by \cite{Ivanovs2021}.
In the next section, we describe one representation that yields such a normalization.

\subsection{The Standardized ME Representation}

We now describe a (non-unique) procedure for transforming the companion representation into a standardized form characterized by $\bm{t} = -\bm{T}\bm{1}$ and $\bm{\alpha}\bm{1} = 1$. These conditions ensure that the closing vector $\bm{t}$ is determined by $\bm{T}$ alone and that $\bm{\alpha}$ integrates to one, without imposing any non-negativity constraints on the entries of $(\bm{\alpha}, \bm{T})$. The construction uses a similarity transformation with a specific matrix $\bm{M}$ \citep{asmussen1996renewal}.

Recalling that $a_p \neq 0$, then the companion matrix $\bm{S}$ is invertible. Direct calculation yields
\begin{equation}\label{eq:S_inverse}
\bm{S}^{-1} = \begin{pmatrix}
-\frac{a_{p-1}}{a_p} & -\frac{a_{p-2}}{a_p} & -\frac{a_{p-3}}{a_p} & \cdots & -\frac{a_1}{a_p} & -\frac{1}{a_p} \\
1 & 0 & 0 & \cdots & 0 & 0 \\
0 & 1 & 0 & \cdots & 0 & 0 \\
\vdots & \vdots & \ddots & \ddots & \vdots & \vdots \\
0 & 0 & 0 & \ddots & 0 & 0\\
0 & 0 & 0 & \cdots & 1 & 0
\end{pmatrix},
\end{equation}
and therefore
\begin{equation}\label{eq:S_inverse_s}
\bm{S}^{-1}\bm{s} = \begin{pmatrix}
-\frac{1}{a_p} \\
0 \\
0 \\
\vdots \\
0
\end{pmatrix}.
\end{equation}
Given the companion representation $(\bm{\beta}, \bm{S}, \bm{s})$ as in the previous section, define the transformation matrix

\begin{equation}\label{eq:M_matrix}
\bm{M} \;=\;
\begin{pmatrix}
\frac{1}{a_p} & 0 & 0 & \cdots & 0 \\
-1 & 1 & 0 & \cdots & 0 \\
0 & -1 & 1 & \cdots & 0 \\
\vdots & \vdots & \vdots & \ddots & \vdots \\
0 & 0 & 0 & -1 & 1
\end{pmatrix}.
\end{equation}

The matrix $\bm{M}$ is constructed so that $\bm{M}\bm{1}=-\bm{S}^{-1}\bm{s}$.
Using the latter, we define the transformed representation
\begin{equation}\label{eq:phlike_rep}
\bm{\alpha}: = \bm{\beta} \bm{M}, \qquad \bm{T}: = \bm{M}^{-1}\bm{S}\bm{M}, \qquad \bm{t}: = \bm{M}^{-1}\bm{s}.
\end{equation}
This ensures that
\[
-\bm{T}\bm{1}=\bm{M}^{-1}\bm{s}=\bm{t},
\]
and moreover,
\[
\bm{\alpha}\bm{1}=\bm{\beta}\bm{M}\bm{1}=-\bm{\beta}\bm{S}^{-1}\bm{s}=\frac{b_1}{a_p}=1,
\]
since $L_R(0)=b_1/a_p=1$.
So that, the similarity transform maps the ME representation into a standardized representation $(\bm{\alpha}, \bm{T})$.

The density and distribution function satisfy
\begin{align}
f_R(x) & = \bm{\alpha} \e^{\bm{T}x} \bm{t} = \bm{\beta} \e^{\bm{S}x} \bm{s}, \qquad x > 0,\label{eqnDensityOfME}\\
1-F_R(x) & = \int_x^\infty f_R(y) \dd y = \bm{\alpha} \e^{\bm{T}x} \bm{1} = \bm{\beta} \e^{\bm{S}x} (-\bm{S}^{-1})\bm{s}, \qquad x > 0,\label{eqnSurvivalOfME}
\end{align}
using that $(\bm{M}^{-1}\bm{S}\bm{M})^n=\bm{M}^{-1}\bm{S}^n\bm{M}$ for every $n\geq 0$.
The choice of $\bm{M}$ in \eqref{eq:M_matrix} is not unique. Any invertible matrix $\bm{M}$ satisfying $\bm{M}\bm{1}=-\bm{S}^{-1}\bm{s}$
yields a valid standardized ME representation with $\bm{t}=-\bm{T}\bm{1}$ and
$\bm{\alpha}\bm{1}=1$. Different choices of $\bm{M}$ produce different representations $(\bm{\alpha}, \bm{T})$, all describing the same distribution but with different matrix structures.

We conclude that any distribution with a rational Laplace-Stieltjes transform admits a standardized ME representation $(\bm{\alpha}, \bm{T})$ with $\bm{t} = -\bm{T}\bm{1}$ and $\bm{\alpha}\bm{1} = 1$.
While the construction above provides one explicit procedure, other transformations are possible. For the remainder of this paper, we work with such a minimal standardized representation without specifying the particular construction. We suppress the closing vector $\bm{t}$ in the notation, writing simply $(\bm{\alpha}, \bm{T})$, since $\bm{t} = -\bm{T}\bm{1}$ is determined by $\bm{T}$.

In particular, spectrally negative Lévy processes with rational jumps have matrix-exponential jump distributions, and the Laplace exponent of the \emph{unkilled} process takes the form
\begin{equation}\label{eq:psi_ME_final}
\psi(\theta) = d\theta + \frac{\sigma^2}{2}\theta^2 + \lambda\big(\bm{\alpha}(\theta \bm I - \bm{T})^{-1}\bm{t} - 1\big),
\end{equation}
where $(\bm{\alpha}, \bm{T})$ is a minimal standardized representation of the jump distribution. For the remainder of the paper, we work with spectrally negative Lévy processes of this form, where $(\bm{\alpha}, \bm{T})$ is of order $p$ with $\bm{t} = -\bm{T}\bm{1}$ and $\bm{\alpha}\bm{1} = 1$, and we incorporate killing at rate $q$ separately through the independent exponential time $e_q$. Equivalently,
\[
\EE\big[\e^{\theta X_t}\mathds{1}\{t<e_q\}\big]=\e^{(\psi(\theta)-q)t}.
\]

\section{Orbit Processes and RAP-Modulation}\label{sec:orbit_processes_and_rap_modulated_stochastic_fluid_processes}
The extension from phase-type to matrix-exponential distributions requires replacing the modulator Markov process of the PH distribution with a more general class of Markov processes. The appropriate framework is provided by orbit processes, which were introduced by \cite{AsmussenBladt1999} and further developed in \cite{bean2010quasi,BuchholzTelek2013}. Recent work by \cite{BeanNguyenNielsenPeralta2022} has extended this framework to study rational arrival processes (RAP) and RAP-modulated stochastic fluid processes with generalized orbit processes that can transition between multiple state spaces.

A \emph{jump} is any discontinuity of the orbit process $\bm{A}$ that does not correspond to termination, and the \emph{interarrival times} are the lengths of the intervals between successive jumps. A \emph{sojourn} is the time spent within a single component between either two consecutive jumps or between the last jump and termination, and its length is the \emph{sojourn time}. The \emph{residual sojourn time} at time $t$ is the remaining time in the current component from $t$ until the next jump or termination. We assume throughout that the relevant processes exhibit a finite number of jumps in any compact time interval, which guarantees that interarrival times are strictly positive \cite{AsmussenBladt1999}.

\subsection{From Markov Chains to Orbit Processes}\label{subsec:from_markov_to_orbits}

Understanding matrix-exponential distributions that are not phase-type in terms of finite-state Markov jump processes is currently limited. 
Nevertheless, they admit a realization through \emph{orbit processes}, a class of piecewise deterministic Markov processes introduced by \cite{AsmussenBladt1999}. An orbit process $\{\bm{A}_t\}_{t \ge 0}$ takes values in a state space $\mathcal{A} \subset \mathbb{R}^p$, which is required to be compact and such that $\bm{A}_{t}\bm{1} = 1$ for every $t\geq 0$. The triple $(\bm{\alpha}, \bm{C}, \bm{D})$ and the dynamics described below must be compatible with $\mathcal{A}$ in the sense that the process never exits it. In general, spelling out $\mathcal{A}$ explicitly is difficult, and one instead works under the assumption that a compact invariant state space exists for the given triple. In the phase-type case, $\mathcal{A}$ can be taken as a subset of the simplex $[0,1]^p$, where the orbit has the interpretation of a conditional distribution of a Markov chain given non-absorption.

Between jump epochs, the orbit process $\{\bm{A}_t\}_{t \ge 0}$ evolves deterministically according to the ordinary differential equation
\begin{equation}\label{eq:orbit_dynamics}
\frac{\dd \bm{A}_s}{\dd s} = \bm{A}_s \bm{C} - (\bm{A}_s \bm{C} \bm{1}) \bm{A}_s,
\end{equation}
where $\bm{C}$ is a $p \times p$ real matrix derived from the matrix-exponential representation, which satisfies certain conditions \cite{AsmussenBladt1999}. 
The solution to \eqref{eq:orbit_dynamics} is
\begin{equation}\label{eq:orbit_solution_classical}
\bm{A}_{t+r} = \frac{\bm{A}_t e^{\bm{C}r}}{\bm{A}_t e^{\bm{C}r} \bm{1}},
\end{equation}
which preserves the normalization $\bm{A}_{t+r}\bm{1} = 1$. The orbit traces out a deterministic curve in the state space $\mathcal{A}$. When $\bm{C} = \bm{T}$ is a phase-type subgenerator, the dynamics \eqref{eq:orbit_solution_classical} reduce to the conditional distribution of a Markov chain on $\{1, \ldots, p\}$ given non-absorption up to time $r$. 
For general matrix-exponential representations, $\bm{C}$ need not be a subgenerator, yet the orbit dynamics remain well-defined and produce a valid piecewise deterministic Markov process, provided $\mathcal{A}$ is invariant.

The rational arrival process $N$ generated by $\{\bm{A}_t\}$ records the jump epochs of the orbit. 
They occur at location-dependent intensities. 
We consider a rational arrival process \cite{AsmussenBladt1999}, which generalizes the Markovian arrival process \cite{neuts1979versatile} to allow matrix-exponential interarrival times. 
The RAP process is characterized by some parameters $(\bm{\alpha}, \bm{C},\bm{D})$, 
The jump intensity of a RAP process at state $\bm{a} \in \mathcal{A}$ is given by
\begin{equation}\label{eq:jump_intensity}
\lambda_N(\bm{a}): = -\bm{a} \bm{C} \bm{1}\geq 0,\qquad \bm a\in \mathcal{A}.
\end{equation}
The survival function of the residual sojourn time at time $t$, given $\bm{A}_t = \bm{a}$, is (see \cite[Equation (2.5)]{BeanNguyenNielsenPeralta2022})
\begin{equation}\label{eq:no_jump_probability}
\mathbb{P}(\text{residual sojourn time at } t > h \mid \bm{A}_t = \bm{a}) = \bm{a} e^{\bm{C}h} \bm{1} \ge 0, \qquad h \ge 0.
\end{equation}
The right-hand side is precisely the tail of a standardized ME distribution with parameters $(\bm{a}, \bm{C})$, as in \eqref{eqnSurvivalOfME}. This is the fundamental reason why rational arrival processes are the natural framework for modelling point processes with matrix-exponential interarrival times: the residual sojourn time automatically inherits a matrix-exponential distribution, regardless of the elapsed time $t$.

Given that a jump occurs at time $t$, the post-jump location is
\begin{equation}\label{eq:jump_location}
\bm{A}_{t} = \frac{\bm{A}_{t-} \bm{D}}{\bm{A}_{t-} \bm{D} \bm{1}}.
\end{equation}

In \cite{AsmussenBladt1999}, the processes considered were non-terminating, which means, they do not go to a cemetery state after a certain independent exponential clock rings. In such non-terminating case, we have the normalization condition
\begin{equation}\label{eq:normalization_cd}
\bm{C}\bm{1} + \bm{D}\bm{1} = \bm{0},
\end{equation}
from which the jump intensity can be rewritten as $\lambda(\bm{a}) = \bm{a} \bm{D} \bm{1}$.
In the next sections, we will relax the above condition to allow termination of the process.

Theorem 1.1 of \cite{AsmussenBladt1999} establishes that the joint density of the first $n$ interarrival times of the RAP process $N$ is
\begin{equation}\label{eq:interarrival_density}
f_{N,p}(x_1, \ldots, x_n) := \bm{\alpha} e^{\bm{C}x_1} \bm{D} e^{\bm{C}x_2} \bm{D} \cdots e^{\bm{C}x_n} \bm{t},
\end{equation}
where $\bm{t} := -\bm{C}\bm{1}=\bm{D}\bm{1}$ and $\bm{\alpha} := \bm{A}_0$ is the initial orbit state.
This representation parallels the Markovian arrival process formula (see, e.g., \cite[Theorem 10.2.3]{BladtNielsen2017}), with the orbit state $\bm{A}_t$ playing the role of the underlying Markov chain state in such a setting.
In particular, when $\bm{C} = \bm{T}$ and $\bm{D} = \bm{t}\bm{\alpha}$, the interarrival times become independent and identically distributed with matrix-exponential distribution $(\bm{\alpha}, \bm{T}, \bm{t})$, recovering the renewal process case.
In this case, $\mathcal{A}$ can be identified with the set of residual lifetime distributions of the interarrival distribution, for which a compact invariant state space satisfying the required conditions indeed exists \cite{AsmussenBladt1999}.

Crucially, the orbit representation accommodates matrix-exponential interarrival distributions that are not phase-type. While the matrix $\bm{C}$ in \eqref{eq:orbit_dynamics} may have negative off-diagonal entries and the matrix $\bm{D}$ may have negative entries, the algebraic structure encoded in \eqref{eq:interarrival_density} ensures that $f_{N,p}$ integrates to the correct probability mass. The orbit process thus provides a general framework for representing point processes with finite-dimensional conditional distributions, strictly generalizing the Markovian arrival process.

\subsection{Orbit Dynamics across Multiple Spaces}

The orbit processes we employ take values in a partitioned state space and can jump between distinct components corresponding to different dynamical regimes, until a possible termination time. We begin with the simplest case before describing the more general framework.

\subsubsection{Simple Two-Space Orbit Processes}\label{subsubsectionTwoSpaceOrbit}

Consider an orbit process $\{\bm{A}_t\}_{t \ge 0}$ taking values in a partitioned state space $\mathcal{Z} = \mathcal{Z}^+ \cup \mathcal{Z}^-$, where $\mathcal{Z}^+ \subset \mathbb{R}^{m^+}$ and $\mathcal{Z}^- \subset \mathbb{R}^{m^-}$ are disjoint compact subsets of the affine hyperplanes $\{\bm a\in\mathbb{R}^{m^+}:\bm a\bm 1=1\}$ and $\{\bm a\in\mathbb{R}^{m^-}:\bm a\bm 1=1\}$, respectively, each required to be invariant under the orbit dynamics within the corresponding region. Here $m^\pm\in \mathbb{N}$. The regions $\mathcal{Z}^+$ and $\mathcal{Z}^-$ will correspond to different regimes of an associated stochastic process.

Between jumps, the orbit evolves deterministically according to the ordinary differential equation
\begin{equation}\label{eq:orbit_two_space}
\frac{\dd \bm{A}_s}{\dd s} = \begin{cases}
\bm{A}_s \bm{C}^+ - (\bm{A}_s \bm{C}^+ \bm{1}) \bm{A}_s & \text{if } \bm{A}_s \in \mathcal{Z}^+, \\
\bm{A}_s \bm{C}^- - (\bm{A}_s \bm{C}^- \bm{1}) \bm{A}_s & \text{if } \bm{A}_s \in \mathcal{Z}^-,
\end{cases}
\end{equation}
where $\bm{C}^+ \in \mathbb{R}^{m^+ \times m^+}$ and $\bm{C}^- \in \mathbb{R}^{m^- \times m^-}$ govern the deterministic evolution within each region. The solution preserves the normalization $\bm{A}_{t+r}\bm{1}=1$:
\begin{equation}\label{eq:orbit_solution_two_space}
\bm{A}_{t+r} = \begin{cases}
{ \frac{\bm{A}_t e^{\bm{C}^+ r}}{\bm{A}_t e^{\bm{C}^+ r} \bm{1}}} \in \mathcal{Z}^+ & \text{if } \bm{A}_t \in \mathcal{Z}^+, \\
\frac{\bm{A}_t e^{\bm{C}^- r}}{\bm{A}_t e^{\bm{C}^- r} \bm{1}} \in \mathcal{Z}^- & \text{if } \bm{A}_t \in \mathcal{Z}^-.
\end{cases}
\end{equation}

Jumps between regions occur according to location-dependent intensities. We allow for terminating orbits by relaxing the normalization conditions to permit
\begin{equation}\label{eq:normalization_two_space_terminating}
\bm{C}^+ \bm{1} + \bm{D}^{+-} \bm{1} \le \bm{0}, \qquad \bm{C}^- \bm{1} + \bm{D}^{-+} \bm{1} \le \bm{0},
\end{equation}
with the \emph{deficits} $-(\bm{C}^+ \bm{1} + \bm{D}^{+-} \bm{1})\ge \bm{0}$ and $-(\bm{C}^- \bm{1} + \bm{D}^{-+} \bm{1})\ge \bm{0}$ understood as the \emph{termination intensity} from region $\mathcal{Z}^+$ and $\mathcal{Z}^-$, respectively. 
While this terminating framework differs from the non-terminating setup emphasized in \cite{BeanNguyenNielsenPeralta2022}, the algebraic structure of their analysis accommodates termination without mathematical difficulty.

When $\bm{A}_t \in \mathcal{Z}^+$, the orbit jumps to $\mathcal{Z}^-$ with intensity $\lambda_N(\bm{A}_t) = \bm{A}_t \bm{D}^{+-} \bm{1}$.
We assume the latter is nonnegative for all states $\bm{A}_t$. 
Similarly, jumps from $\mathcal{Z}^-$ to $\mathcal{Z}^+$ occur with intensity $\lambda_N(\bm{A}_t) = \bm{A}_t \bm{D}^{-+} \bm{1}$. 
Defining the \emph{sojourn intensity} $\lambda_S$ of leaving region $\mathcal{Z}^k$ with $k\in\{+,-\}$, either by jumping to the other region or by termination, then $\lambda_S(\bm{A}_t) =-\bm{A}_t(\bm{C}^k \bm{1} + \bm{D}^{k\ell} \bm{1})+ \bm{A}_t\bm{D}^{k\ell} \bm{1}= -\bm{A}_t \bm{C}^k \bm{1}\ge 0$, where $\ell\in\{+,- \}$ and $\ell\neq k$.
Consequently, the survival function of the residual sojourn time at time $t$, given $\bm{A}_t = \bm{a} \in \mathcal{Z}^k$, is
\begin{equation}
\mathbb{P}(\text{residual sojourn time at } t > h \mid \bm{A}_t = \bm{a}) = \bm{a} e^{\bm{C}^k h} \bm{1} \ge 0, \qquad h \ge 0.\label{eq:survival-residual-simple}
\end{equation}

Given that a jump occurs at time $t$, the post-jump location is deterministic:
\begin{equation}\label{eq:jump_two_space}
\bm{A}_t = \begin{cases}
\frac{\bm{A}_{t-} \bm{D}^{+-}}{\bm{A}_{t-} \bm{D}^{+-} \bm{1}} \in \mathcal{Z}^- & \text{if } \bm{A}_{t-} \in \mathcal{Z}^+, \\
\frac{\bm{A}_{t-} \bm{D}^{-+}}{\bm{A}_{t-} \bm{D}^{-+} \bm{1}} \in \mathcal{Z}^+ & \text{if } \bm{A}_{t-} \in \mathcal{Z}^-.
\end{cases}
\end{equation}
The matrices $(\bm{C}^+, \bm{C}^-, \bm{D}^{+-}, \bm{D}^{-+})$, along with an initial orbit point, completely characterize the orbit process dynamics, encoding both the deterministic evolution within each region and the stochastic transitions between regions.

\subsubsection{Extension to Multiple Hyperplanes}
In more complex applications, a refined orbit structure with additional state space partitions is required. The framework of \cite{BeanNguyenNielsenPeralta2022} extends the two-space construction by partitioning each region $\mathcal{Z}^k$, $k\in \{+,-\}$, into multiple components. Specifically, each region decomposes as
\begin{equation}\label{eq:hyperplane_partition_general}
\mathcal{Z}^k = \bigcup_{i=1}^{n_k} \mathcal{Z}_i^k, \qquad k \in \{+, -\},
\end{equation}
where each $\mathcal{Z}_i^k$ is a compact subset of a distinct affine hyperplane, embedded in a common ambient space $\mathbb{R}^{\sum_i m_i^k}$. More precisely, $\mathcal{Z}_i^k$ consists of points of the form $(\bm{0}, \ldots, \bm{0}, \bm{a}, \bm{0}, \ldots, \bm{0})$, where $\bm{a}\in \mathbb{R}^{m_i^k}$ occupies the $i$-th block and satisfies $\bm{a}\bm{1} = 1$, with all other blocks equal to zero. The hyperplanes corresponding to distinct indices $i$ are mutually orthogonal, with the sets $\mathcal{Z}_i^k$ geometrically separated within $\mathbb{R}^{\sum_i m_i^k}$.
Within each component $\mathcal{Z}_i^k$, the orbit evolves according to a matrix $\bm{C}_{ii}^k$, and jumps between components within the same region or across regions are governed by a collection of matrices $\{\bm{C}_{ij}^k, \bm{D}_{ij}^{k\ell}\}$, where $i\in \{1,\ldots, n_k\}$, $j\in \{1,\ldots, n_\ell\}$ and $k,\ell\in \{+,-\}$ with $k\neq\ell$. 
Explicitly, the diagonal blocks $\bm{C}_{ii}^k$ govern the deterministic orbit evolution within $\mathcal{Z}_i^k$, the off-diagonal blocks $\bm{C}_{ij}^k$ encode jumps within region $\mathcal{Z}^k$, and the blocks $\bm{D}_{ij}^{k\ell}$ specify jumps from $\mathcal{Z}_i^k$ to $\mathcal{Z}_j^\ell$.
We emphasize that the matrices are defined in such a way that the evolution of the orbit process is well-defined, that is, once it jumps to a new component, it takes values there until the sojourn time.
The survival function of the residual sojourn time at time $t$, given $\bm{A}_t = (\bm{0}, \ldots, \bm{0}, \bm{a}, \bm{0}, \ldots, \bm{0}) \in \mathcal{Z}_i^k$, is
\begin{equation}\label{eq:residual_sojourn_component}
\mathbb{P}(\text{residual sojourn time at } t > h \mid \bm{A}_t = (\bm{0}, \ldots, \bm{0}, \bm{a}, \bm{0}, \ldots, \bm{0})) = \bm{a} e^{\bm{C}_{ii}^k h} \bm{1} \ge 0, \qquad h \ge 0.
\end{equation}
See \cite[Section 3.1]{BeanNguyenNielsenPeralta2022} for details.

The normalization conditions are also relaxed when compared with \cite{BeanNguyenNielsenPeralta2022} to allow termination:
\begin{equation}\label{eq:normalization_hyperplane}
\sum_{j=1}^{n_k} \bm{C}_{ij}^k\bm{1} + \sum_{j=1}^{n_\ell} \bm{D}_{ij}^{k\ell}\bm{1} \le \bm{0},\qquad \text{where }k\neq \ell,\; i\in \{1,\ldots, n_k\},
\end{equation}
with any deficit, that is, the negative of \eqref{eq:normalization_hyperplane}, interpreted as the termination intensity from hyperplane $\mathcal{Z}_i^k$. The algebraic structure preserves the essential features of the simple two-space case while accommodating additional complexity.

In Section~\ref{sec:main_result_scale_function_representation}, we employ both frameworks. For simpler cases, the two-space orbit process described above suffices. For more complex situations, we construct specific orbit processes with hyperplane structure tailored to the problem at hand. The full technical details of the general hyperplane framework are not needed for our purposes, as the specific constructions we employ will be presented explicitly when required.

\subsection{RAP-Modulated Stochastic Fluid Processes and First Passage Probabilities}\label{subsectionRAPMSFP}
A RAP-modulated stochastic fluid process is a Markov additive process $\{(R_t, \bm{A}_t)\}_{t \ge 0}$ where the level component $R_t$ is piecewise linear and the modulator component $\bm{A}_t$ is an orbit process \citep{BeanNguyenNielsenPeralta2022}. These processes generalize classical fluid flow models, in which the modulator is a finite-state Markov chain, to accommodate orbit processes and thus matrix-exponential structures.
A fluid flow process $(R_t,A_t)_{t\geq 0}$ is a continuous-time stochastic process whose level $R$ evolves linearly and deterministically between random switching times determined by the Markov chain $A$ on a finite state space. The reader can think of it as a water storage which at random times, is being filled or drained at linear speed.

The state space partition $\mathcal{Z} = \mathcal{Z}^+ \cup \mathcal{Z}^- \cup \mathcal{Z}^0$ of the RAP-modulated stochastic fluid process, corresponds to increasing, decreasing, and constant regimes for the level process. In the simple two-space case $\mathcal{Z} = \mathcal{Z}^+ \cup \mathcal{Z}^-$ that we will consider, the level process takes the form
\begin{equation}\label{eq:level_process}
R_t = \int_0^t \big(\mathds{1}\{\bm{A}_s \in \mathcal{Z}^+\} - \mathds{1}\{\bm{A}_s \in \mathcal{Z}^-\}\big)\,\dd s, \qquad t \ge 0,
\end{equation}
so that $R_t$ increases at unit rate when the orbit is in $\mathcal{Z}^+$ and decreases at unit rate when it is in $\mathcal{Z}^-$. More generally, when each region $\mathcal{Z}^k$ admits a hyperplane decomposition $\mathcal{Z}^k = \bigcup_{i=1}^{n_k} \mathcal{Z}_i^k$, the orbit dynamics within hyperplane $\mathcal{Z}_i^k$ are governed by the matrix $\bm{C}_{ii}^k$, while jumps within and across regions are governed by the off-diagonal blocks $\bm{C}_{ij}^k$ and $\bm{D}_{ij}^{k\ell}$, respectively. These matrices are assembled into block matrices $\bm{C}^k$ and $\bm{D}^{k\ell}$ as in \eqref{eq:block_matrices} below.

A fundamental result in the orbit framework is the characterization of first passage probabilities via the matrix $\bm{\Psi}$ \citep{BeanNguyenNielsenPeralta2022}. Consider a RAP-modulated stochastic fluid process $\{(R_t, \bm{A}_t)\}_{t \ge 0}$ starting with $R_0 = 0$ and $\bm{A}_0 = \bm{\alpha} \in \mathcal{Z}^+$, and let $\mathbb{E}_{\bm{\alpha}}$ denote the expectation conditional on this initial condition. Let $\tau_- = \inf\{t > 0 : R_t \le 0\}$ denote the first return time to level zero. The matrix $\bm{\Psi}$ is characterized by the identity
\begin{equation}\label{eq:psi_interpretation}
    \mathbb{E}_{\bm{\alpha}}[\bm{A}_{\tau_-} \mathds{1}\{\tau_- < \infty\}] = \bm{\alpha} \bm{\Psi},
\end{equation}
which provides the direct physical interpretation of $\bm{\Psi}$ as the mean value of the orbit process evaluated at the first downcrossing epoch.
The importance of this identity will be evident in Section \ref{sec:main_result_scale_function_representation}, when we relate the first hitting time of level $-x$ by the L\'evy process with the mean value of the orbit at $\tau_-$.

The computation of $\bm{\Psi}$ relies on a recursive sequence of matrices $\{\bm{\Psi}_n\}_{n \ge 0}$ satisfying Sylvester's recurrence
\begin{equation}\label{eq:psi_sylvester}
\bm{C}^+\bm{\Psi}_n + \bm{\Psi}_n \bm{C}^- 
= -\bm{D}^{+-} - \bm{\Psi}_{n-1}\bm{D}^{-+}\bm{\Psi}_{n-1},
\qquad n\ge 1,
\end{equation}
with $\bm{\Psi}_0 = \bm{0}$. 
As $n \to \infty$, the sequence $\{\bm{\Psi}_n\}_{n\ge 0}$ converges entrywise to $\bm{\Psi}$ which satisfies the identity \eqref{eq:psi_interpretation} (see \cite[Section 4.1]{BeanNguyenNielsenPeralta2022}).
The precise physical interpretation of each $\bm{\Psi}_n$ in terms of restricted path sets is given in \cite[Theorem 4.1]{BeanNguyenNielsenPeralta2022}.
Here, $\bm{C}^+$, $\bm{C}^-$, $\bm{D}^{+-}$ and $\bm{D}^{-+}$ are block matrices assembled from their corresponding subblocks:
\begin{equation}\label{eq:block_matrices}
\bm{C}^+ = 
\begin{pmatrix}
\bm{C}_{11}^+ & \cdots & \bm{C}_{1,n_+}^+\\
\vdots & \ddots & \vdots\\
\bm{C}_{n_+,1}^+ & \cdots & \bm{C}_{n_+,n_+}^+
\end{pmatrix},
\qquad
\bm{D}^{+-} = 
\begin{pmatrix}
\bm{D}_{11}^{+-} & \cdots & \bm{D}_{1,n_-}^{+-}\\
\vdots & \ddots & \vdots\\
\bm{D}_{n_+,1}^{+-} & \cdots & \bm{D}_{n_+,n_-}^{+-}
\end{pmatrix},
\end{equation}
and similarly for $\bm{C}^-$ and $\bm{D}^{-+}$. 

The quadratic term $\bm{\Psi}_{n-1}\bm{D}^{-+}\bm{\Psi}_{n-1}$ captures the contribution from paths that make an excursion from $\mathcal{Z}^+$ to $\mathcal{Z}^-$ and back to $\mathcal{Z}^+$ before eventually reaching level zero, with the orbit values at the two transitions from $\mathcal{Z}^-$ to $\mathcal{Z}^+$ contributing the two $\bm{\Psi}_{n-1}$ factors. The limit matrix $\bm{\Psi}$ therefore satisfies the nonsymmetric algebraic Riccati equation \citep[Corollary 4.2]{BeanNguyenNielsenPeralta2022}:
\begin{equation}\label{eq:riccati}
    \bm{C}^+ \bm{\Psi} + \bm{\Psi} \bm{C}^- = -\bm{D}^{+-} - \bm{\Psi} \bm{D}^{-+} \bm{\Psi}.
\end{equation}
We remark that establishing the uniqueness of solutions to \eqref{eq:riccati} in the general matrix-exponential setting remains an open problem. The constructions in Theorems \ref{teoScaleFnBV} and \ref{teoScaleFnUV} rely instead on the limit of the recursive sequence \eqref{eq:psi_sylvester}, which selects a distinguished fixed point as the limit of the recursion started from $\bm{\Psi}_0=\bm{0}$. We nonetheless employ \eqref{eq:riccati} to characterize the limiting parameters and the scale function of the L\'evy process.

\subsubsection{The Downward Record Process}

For applications to scale functions, we require the orbit distribution at downcrossing times from arbitrary levels. The latter are the analogue of the \emph{descending ladder variables} of random walks (see \cite{MR0270403} Chapter XII.1); and the ladder process of L\'evy processes (see \cite{MR1406564} Chapter VI). Note that in this case, we also need to encode the values of the orbit process. 
Following \cite[Section 4.2]{BeanNguyenNielsenPeralta2022}, we introduce the \emph{downward record process}.

For $x \ge 0$, define $\gamma_{\{-x\}} := \inf\{t>0 : R_t=-x\}$ as the first time the level process downcrosses level $-x \le 0$. 
Note that on the event $R_t=-x$ for the first time, we have $\bm{A}_t\in\mathcal{Z}^-$. 
The downward record process $\{(\ell_x, \bm{O}_x)\}_{x \ge 0}$ is defined by
\begin{equation}\label{eq:downward_record}
(\ell_x, \bm{O}_x) = \begin{cases}
(R_{\gamma_{\{-x\}}}, \bm{A}_{\gamma_{\{-x\}}}) & \text{if } \gamma_{\{-x\}} < \infty, \\
(\infty, \Delta) & \text{if } \gamma_{\{-x\}} = \infty,
\end{cases}
\end{equation}
where $\Delta$ is an isolated cemetery state. When $\bm{O}_x \neq \Delta$, the vector $\bm{O}_x$ represents the orbit value at the first downcrossing of level $-x$. The process $\{\bm{O}_x\}_{x \ge 0}$ is a (possibly terminating) concatenation of orbits with state space $\mathcal{Z}^- \cup \{\Delta\}$.

The physical interpretation of $\bm{O}_x$ is that it tracks the ``state'' of the system at each new low of the level process. As $x$ increases, $\bm{O}_x$ records successive orbit values at downcrossing epochs. This sequence provides complete information about the orbit trajectory during the excursion below level zero.

One important property of $\bm{O}_x$, is that its expected value has an explicit matrix exponential form. Indeed, if the process $\{(R_t, \bm{A}_t)\}_{t \ge 0}$ starts with orbit $\bm{\beta} \in \mathcal{Z}^-$, then \cite[Theorem 4.4]{BeanNguyenNielsenPeralta2022} gives
\begin{equation}\label{eq:O_x_formula_minus}
\mathbb{E}_{\bm{\beta}}[\bm{O}_x \mathds{1}\{\gamma_{\{-x\}} < \infty\}] = \bm{\beta} \, e^{(\bm{C}^- + \bm{D}^{-+}\bm{\Psi})x}.
\end{equation}
If instead the process starts with orbit $\bm{\alpha} \in \mathcal{Z}^+$, then the first downcrossing necessarily passes through $\mathcal{Z}^-$ via the matrix $\bm{\Psi}$, and \cite[Corollary 4.5]{BeanNguyenNielsenPeralta2022} establishes
\begin{equation}\label{eq:O_x_formula}
\mathbb{E}_{\bm{\alpha}}[\bm{O}_x \mathds{1}\{\gamma_{\{-x\}} < \infty\}] = \bm{\alpha} \bm{\Psi} \, e^{(\bm{C}^- + \bm{D}^{-+}\bm{\Psi})x}.
\end{equation}
The probability of downcrossing level $-x$ is obtained in either case by applying the sum vector. For example, when starting in $\bm{\alpha} \in \mathcal{Z}^+$, we have:
\begin{equation}\label{eq:downcrossing_prob}
\mathbb{P}_{\bm{\alpha}}(\gamma_{\{-x\}} < \infty) = \bm{\alpha} \bm{\Psi} \, e^{(\bm{C}^- + \bm{D}^{-+}\bm{\Psi})x} \bm{1}.
\end{equation}

The formula \eqref{eq:O_x_formula} decomposes naturally: the factor $\bm{\alpha} \bm{\Psi}$ represents the expected orbit value at the first entry into $\mathcal{Z}^-$ (i.e., at downcrossing of level zero) by \eqref{eq:psi_interpretation}, while the exponential term $e^{(\bm{C}^- + \bm{D}^{-+}\bm{\Psi})x}$ propagates this distribution forward as the level continues to decrease by an additional amount $x$. The matrix $\bm{C}^- + \bm{D}^{-+}\bm{\Psi}$ captures the combined effect of deterministic orbit evolution (via $\bm{C}^-$) and the influence of potential returns to $\mathcal{Z}^+$ and subsequent re-entries (encoded in $\bm{D}^{-+}\bm{\Psi}$).

This construction is essential for computing scale functions at positive levels, as it provides the link between the orbit dynamics and the probability of reaching arbitrary negative levels starting from above zero.

\begin{remark}\label{rem:terminating_raps}
The results from \cite[Sections 4.1 and 4.2]{BeanNguyenNielsenPeralta2022} cited above, including the characterization of $\bm{\Psi}$ via \eqref{eq:psi_sylvester} and the downward record process formulas \eqref{eq:O_x_formula_minus}--\eqref{eq:downcrossing_prob}, were established for non-terminating RAP-modulated fluid processes. However, these results remain valid in the terminating case with relaxed normalization conditions \eqref{eq:normalization_two_space_terminating} or \eqref{eq:normalization_hyperplane}. The key observation is that all formulas in those sections compute expectations conditional on downcrossing events within $\mathcal{Z}^+$ and $\mathcal{Z}^-$. Allowing termination simply means that paths which terminate before the relevant downcrossing event occurs do not contribute to these conditional expectations. The proofs from \cite{BeanNguyenNielsenPeralta2022} extend directly to the terminating case, and the verification of this extension is straightforward.
\end{remark}

\begin{example}\label{ex:ME_renewal}
Consider a RAP-modulated fluid process $\{(R_t, \bm{A}_t)\}_{t \ge 0}$ in which the sojourn time within each component $\mathcal{Z}_i^k$ has a matrix-exponential distribution with representation $(\bm{\alpha}_i^k, \bm{S}_i^k)$, and transitions between components are governed by a substochastic matrix $\bm{P} = \{p_{ij}^{k\ell}\}$ with
\begin{equation}
\sum_{\substack{j=1\\ j\neq i}}^{n_k} p_{ij}^{kk} + \sum_{j=1}^{n_\ell} p_{ij}^{k\ell} \le 1, \qquad \text{where }k\neq\ell, \quad i \le n_k,
\end{equation}
where the deficit $1 - \sum_{j\neq i} p_{ij}^{kk} - \sum_{j=1}^{n_\ell} p_{ij}^{k\ell}$ represents the termination probability from component $\mathcal{Z}_i^k$. Following \cite[Example 3.2]{BeanNguyenNielsenPeralta2022}, the subblocks are chosen as
\begin{equation}\label{eq:ME_renewal_params}
\bm{C}_{ii}^k = \bm{S}_i^k, \qquad \bm{C}_{ij}^k = p_{ij}^{kk}(-\bm{S}_i^k\bm{1})\,\bm{\alpha}_j^k, \qquad \bm{D}_{ij}^{k\ell} = p_{ij}^{k\ell}(-\bm{S}_i^k\bm{1})\,\bm{\alpha}_j^\ell,
\end{equation}
for $i \neq j$ in the second expression and $\ell \neq k$ in the third. With these choices, the sojourn time in $\mathcal{Z}_i^k$ starting from orbit value $\bm{\alpha}_i^k$ has survival function $\bm{\alpha}_i^k e^{\bm{S}_i^k t}\bm{1}$, which coincides with the tail of the standardized ME distribution of parameters $(\bm{\alpha}_i^k, \bm{S}_i^k)$ (see \eqref{eqnSurvivalOfME}). This follows directly from \eqref{eq:residual_sojourn_component}, since $\bm{C}_{ii}^k = \bm{S}_i^k$ and the orbit starts fresh at $\bm{\alpha}_i^k$ upon entering $\mathcal{Z}_i^k$, so that the residual sojourn time coincides in distribution with the full sojourn time.
Upon completion of the sojourn, the orbit exits $\mathcal{Z}_i^k$ and lands at $\bm{\alpha}_j^\ell$ with probability $p_{ij}^{k\ell}$, or terminates with probability $1 - \sum_{j\neq i} p_{ij}^{kk} - \sum_{j} p_{ij}^{k\ell}$.
From \eqref{eq:normalization_hyperplane}, the termination intensity from $\mathcal{Z}_i^k$ is
\[
-\sum_{j=1}^{n_k} \bm{C}_{ij}^k\bm{1} - \sum_{j=1}^{n_\ell} \bm{D}_{ij}^{k\ell}\bm{1} = (-\bm{S}_i^k\bm{1})\left(1-\sum_{\substack{j=1\\ j\neq i}}^{n_k} p_{ij}^{kk}- \sum_{j=1}^{n_\ell}p_{ij}^{k\ell}\right),
\]
where $k\neq\ell$. When $\bm{P}$ is stochastic, the resulting process is a Markov renewal process with matrix-exponential sojourn times and transition kernel $\bm{P}$; when $\bm{P}$ is substochastic, termination may occur at the completion of any sojourn.
\end{example}

\section{Path Decomposition of the L\'evy Process and Embedding}\label{sec:main_result_scale_function_representation}

\subsection{Path Decomposition at Level $-x$}\label{sec:path_decomp}
A spectrally negative Lévy process has no positive jumps, so any \emph{upcrossing} of a level, that is, a time in which it hits or surpasses a positive level, occurs continuously.
A \emph{downcrossing} may occur either continuously (via a Brownian component when $\sigma > 0$, or via negative drift when $\sigma = 0$ and $d < 0$) or by a negative jump. This yields a natural two-stage decomposition of the event $\{\tau_{\{-x\}} < e_q\}$ (illustrated in Figure~\ref{fig:path_decomp} for the case $\sigma = 0$, $d > 0$): 
\begin{enumerate}
\item \textbf{First downcrossing of $-x$.} Starting from $X_0 = 0$, the process must first pass below $-x$ at time $\tau^-_{\{-x\}} = \inf\{t : X_t < -x\}$.
Define the overshoot $Z := -x - X_{\tau^-_{\{-x\}}} \ge 0$, which measures how far below $-x$ the process lands at the first downcrossing. 
Note that $X$ cannot hit level $-x$ exactly upon a jump almost surely, as demonstrated below.
Therefore, if the downcrossing occurs by a jump, then $Z > 0$; if it occurs continuously, then $Z = 0$.

\item \textbf{Return to $-x$ from below.} From $-x - Z$ with $Z \ge 0$, the process must reach $-x$ by a continuous upcrossing, since there are no positive jumps. 
By the strong Markov property at $\tau^-_{\{-x\}}$ and the memoryless property of $e_q$, the first hitting time probability takes the form $\e^{-\Phi_q Z}$, by the classical ladder height identity for spectrally negative Lévy processes \cite[Theorem 3.12]{Kyprianou2014}. 
When the downcrossing is continuous ($Z = 0$), the return to $-x$ is immediate, since the sample paths of the Brownian component are regular for the lower and upper half-lines $(-\infty, 0), ( 0,\infty)$ (see \cite{MR0242261} or \cite[Lemma 6]{MR4130409}).

\end{enumerate}

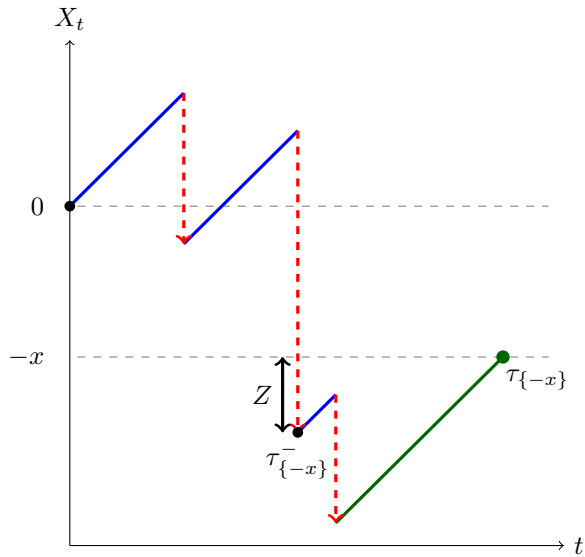
\begin{figure}[ht]
\centering
\begin{tikzpicture}[scale=1.0]
\draw[->] (0,-4.5) -- (0,2.2) node[above]{\small $X_t$};
\draw[->] (0,-4.5) -- (6.5,-4.5) node[right]{\small $t$};

\draw[dashed, gray] (0.1, 0) -- (6.3, 0);
\node at (-0.2, 0) [left]{\small $0$};
\draw[dashed, gray] (0.1, -2) -- (6.3, -2);
\node at (-0.2, -2) [left]{\small $-x$};

\draw[very thick, blue] (0, 0) -- (1.5, 1.5);
\draw[very thick, blue] (1.5, -0.5) -- (3.0, 1.0);
\draw[very thick, blue] (3.0, -3.0) -- (3.5, -2.5);

\draw[very thick, green!40!black] (3.5, -4.2) -- (5.7, -2.0);

\draw[very thick, red, dashed, ->] (1.5, 1.5) -- (1.5, -0.5);
\draw[very thick, red, dashed, ->] (3.0, 1.0) -- (3.0, -3.0);
\draw[very thick, red, dashed, ->] (3.5, -2.5) -- (3.5, -4.2);

\fill (0, 0) circle (2pt);
\node at (0.3, 0.3) [above right]{};

\fill[green!40!black] (5.7, -2.0) circle (2.5pt);
\node at (5.6, -2.3) [right]{\small $\tau_{\{-x\}}$};

\draw[<->, very thick] (2.8, -2.0) -- (2.8, -3.0);
\node at (2.8, -2.5) [left]{\small $Z$};

\fill (3.0, -3.0) circle (2pt);
\node at (3.0, -3) [below]{\small $\tau^-_{\{-x\}}$};


\node at (4.6, -3.2) [above]{};
\end{tikzpicture}
\caption{Path decomposition for hitting $-x$ when $\sigma = 0$, $d > 0$. The process starts at zero, drifts upward (blue) with downward jumps (red). First downcrossing $\tau^-_{\{-x\}}$ has overshoot $Z$, followed by continuous upcrossing (green) to reach $\tau_{\{-x\}}$.}
\label{fig:path_decomp}
\end{figure}

Note that if $\{\tau_n\}_{n \ge 1}$ are the jump epochs of $X$, then for any fixed level $-x < 0$, we have $\mathbb{P}(\exists\, n\in \mathbb{N}: X_{\tau_n} = -x)=0$. Indeed, for each $n\geq 1$,
\[
\PP(X_{\tau_n}=-x)=\EE\Big[\PP\big(C_n=x+X_{\tau_n-}\mid X_{\tau_n-}\big)\Big]=0,
\]
because $C_n$ is independent of the pre-jump history and has a density. A union bound over $n\in\mathbb{N}$ then yields the claim.

From the previous path decomposition, the hitting probability factors as
\begin{equation}\label{eq:hitting_factor}
\PP(\tau_{\{-x\}} < e_q) = \mathbb{E}\Big[\mathds{1}\big\{\tau^-_{\{-x\}} < e_q\big\}\mathbb{E}_{-Z}\big[\mathds{1}\{\tau_{\{0\}} < e_q\}\big]\Big] = \mathbb{E}\Big[\e^{-\Phi_q Z}\;\mathds{1}\big\{\tau^-_{\{-x\}} < e_q\big\}\Big].
\end{equation}
By the spatial homogeneity of Lévy processes, the probability of reaching $0$ from $-Z$ is identical to reaching $-Z$ from $0$. Thus, evaluating the conditional law $\mathbb{E}_{-Z}[\mathds{1}\{\tau_{\{0\}} < e_q\}]$ yields $\e^{-\Phi_q Z}$. We thereby translate the hitting time problem precisely into evaluating the transform of the overshoot (for general quintuple laws regarding the overshoot, see \cite[Theorem 7.7]{Kyprianou2014}).

To express this expectation in terms of the orbit framework, we will construct specific RAP-modulated fluid process embeddings for the bounded-variation case ($\sigma = 0$) and the unbounded-variation case ($\sigma > 0$) in the following subsections. The construction of these embeddings is more delicate than in the classical phase-type case analyzed by \cite{Ivanovs2021}, as we no longer have the convenient continuous-time Markov chain interpretations that provide direct probabilistic guidance for the embedding structure.


\subsection{Bounded Variation Case: $\sigma = 0$}

Recalling the L\'evy process description in \eqref{eqnXTrajectories}, assume $\sigma = 0$ and $d > 0$, so that $X$ has bounded variation with positive drift $d$, downward jumps of rate $\lambda$, and jump sizes following a matrix-exponential distribution with parameters $(\bm{\alpha}, \bm{T})$ of order $p$. We embed $X$ killed at rate $q$ into a RAP-modulated stochastic fluid process $\{(R_t, \bm{A}_t)\}_{t \ge 0}$ with $\pm 1$ rewards as illustrated in Figure~\ref{fig:embedding_bv}.

\subsubsection{Embedding}

The orbit process that we will use will be in the setting of Subsection \ref{subsubsectionTwoSpaceOrbit}, that is, a two-space orbit. 
The state space is $\mathcal{Z} = \mathcal{Z}^+ \cup \mathcal{Z}^-$, where $\mathcal{Z}^+ = \{1\} \subset \mathbb{R}$ is a singleton representing the filling regime and $\mathcal{Z}^-$ is an $p$-dimensional orbit space representing the drain regime. 
Formally, $\mathcal{Z}^-$ represents the relevant compact orbit state space within the affine hyperplane $\{\bm{a}\in\mathbb{R}^p:\bm{a}\bm{1}=1\}$ supporting the matrix-exponential parameters. The level process increases at rate $+1$ when $\bm{A}_t \in \mathcal{Z}^+$ and decreases at rate $-1$ when $\bm{A}_t \in \mathcal{Z}^-$.

To match the increment structure of $X$ with $\pm 1$ drifts, the sojourn rates in $\mathcal{Z}^+$ must be rescaled by $1/d$.
We use the parameters of Example \ref{ex:ME_renewal} as follows. Define $n_+=n_-=1$, $\bm{\alpha}^+_1=1$, $\bm{\alpha}^-_1=\bm{\alpha}$, $\bm{S}^+_1=-(\lambda + q)/d$ and $\bm{S}^-_1=\bm{T}$. A jump from $\mathcal{Z}^+$ occurs with probability $\lambda/(\lambda+q)=p^{+-}_{11}$, the probability that a Poisson arrival precedes the exponential killing time. After a sojourn in $\mathcal{Z}^-$ the process returns to $\mathcal{Z}^+$ almost surely, so $p^{-+}_{11}=1$. 
The orbit parameters are therefore
\begin{equation}
\begin{split}\label{eq:params_bv_plus}
\bm{C}^+ = -\frac{\lambda + q}{d}, \qquad &\bm{D}^{+-}= p_{11}^{+-}(-\bm{S}_1^+\bm{1})\,\bm{\alpha}_1^-= \frac{\lambda}{d}\,\bm{\alpha},\\
\bm{C}^- = \bm{T},\qquad \ \ \qquad &\bm{D}^{-+} =p_{11}^{-+}(-\bm{T}\bm{1})1= \bm{t} := -\bm{T}\bm{1}.
\end{split}
\end{equation}

\begin{remark}
When $q>0$, the normalization condition for $\mathcal{Z}^+$ holds with strict inequality: $C^+\bm{1} + \bm{D}^{+-}\bm{1} = -q/d < 0$, with the deficit representing termination at rate $q/d$ from $\mathcal{Z}^+$. For $\mathcal{Z}^-$, the normalization holds with equality: $\bm{C}^-\bm{1} + \bm{D}^{-+}\bm{1} = \bm{0}$, so there is no termination from the drain regime. This implements the $q$-killing via the termination mechanism of Section~\ref{subsectionRAPMSFP}, avoiding the need for an explicit cemetery state. When $q=0$ there is no killing, the normalization for $\mathcal{Z}^+$ holds with equality, and the orbit process is non-terminating. The killing therefore occurs only during the upward drift phase, that is, while the orbit is in $\mathcal{Z}^+$. This is consistent with the $q$-killing of $X$: since $X$ is killed at an independent exponential time with rate $q$, and the embedding associates the drift intervals of $X$ with sojourns in $\mathcal{Z}^+$, the killing is absorbed into the termination intensity of $\mathcal{Z}^+$ rather than $\mathcal{Z}^-$.
\end{remark}

A sojourn in $\mathcal{Z}^+$ has duration $\mathrm{Exp}((\lambda+q)/d)$, giving a level increment distributed as $d \cdot \mathrm{Exp}(\lambda+q)$, which matches in distribution, the time until $X$ has a negative jump or is killed. A sojourn in $\mathcal{Z}^-$ has a ME-distributed duration of parameters $(\bm{\alpha}, \bm{T})$, which matches the distribution of the negative jump of $X$. Upon exit from $\mathcal{Z}^+$, the process either enters $\mathcal{Z}^-$ with probability $\lambda/(\lambda+q)$ (a jump occurs) or terminates with probability $q/(\lambda+q)$ (killed).

\begin{figure}[ht]
\centering
\begin{tikzpicture}[scale=0.95]
\draw[->] (0,0) -- (3.8,0) node[right]{$t$};
\draw[->] (0,0) -- (0,4.2) node[above]{$X_t$};

\draw[very thick, blue] (0,0) -- (1,2);
\draw[very thick, blue] (1,1) -- (2,3);
\draw[very thick, blue] (2,1.5) -- (3,3.5);

\draw[very thick, red, dashed] (1,2) -- (1,1);
\draw[very thick, red, dashed] (2,3) -- (2,1.5);

\draw[dotted, gray] (0,2) -- (1,2);
\draw[dotted, gray] (0,1) -- (1,1);

\node at (1,2)[circle,fill,inner sep=1.5pt, blue]{};
\node at (1,1)[circle,fill,inner sep=1.5pt, red]{};

\node at (1.9,-1.2) {\small\textbf{(a) Spectrally negative L\'evy process}};

\begin{scope}[xshift=5.5cm]
\draw[->] (0,0) -- (9.2,0) node[right]{$t$};
\draw[->] (0,0) -- (0,4.2) node[above]{$R_t$};

\draw[very thick, blue] (0,0) -- (2,2);
\draw[very thick, blue] (3,1) -- (5,3);
\draw[very thick, blue] (6.5,1.5) -- (8.5,3.5);

\draw[very thick, red] (2,2) -- (3,1);
\draw[very thick, red] (5,3) -- (6.5,1.5);

\draw[dotted, gray] (0,2) -- (2,2);
\draw[dotted, gray] (0,1) -- (3,1);

\node at (2,2)[circle,fill,inner sep=1.5pt, blue]{};
\node at (3,1)[circle,fill,inner sep=1.5pt, red]{};

\node at (4.25,-1.2) {\small\textbf{(b) RAP-modulated fluid process}};
\end{scope}
\end{tikzpicture}
\caption{Embedding of a spectrally negative Lévy process into a RAP-modulated fluid process. In (a), $X_t$ drifts upward and jumps downward. In (b), each jump unfolds into a descending sojourn in $\mathcal{Z}^-$ with duration equal to the jump size and level decreasing at unit rate, alternating with ascending drift phases in $\mathcal{Z}^+$. Dotted lines and dots represent the first stage of each process, highlighting that at those levels both processes take the same value.}
\label{fig:embedding_bv}
\end{figure}
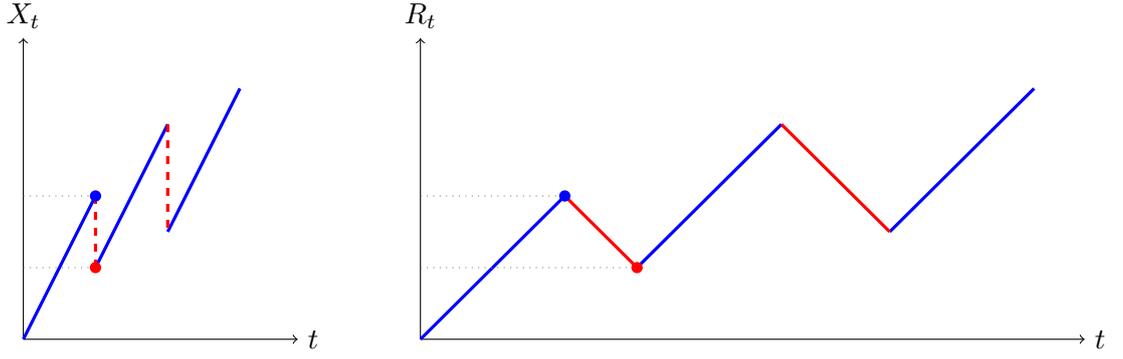
\subsubsection{Scale Function Representation}

To express the hitting probability \eqref{eq:hitting_factor} in terms of the orbit framework, we condition on the orbit value at the first downcrossing of level $-x$. 
Note that in the bounded-variation case with $d > 0$, the downcrossing of the RAP-modulated fluid process occurs during a sojourn in $\mathcal{Z}^-$. That is to say, after a jump from $\mathcal{Z}^+$ to $\mathcal{Z}^-$ and before ending the sojourn of $\mathcal{Z}^-$, the process reaches levels below $-x$. 
Using the previous idea, we obtain the following proposition, which is the matrix-exponential extension of \cite[Theorem 1]{Ivanovs2021} in the bounded variation case.

\begin{proof}[Proof of first part of Theorem \ref{teoScaleFnBV}]
Given that $\bm{O}_x = \bm{a} \in \mathcal{Z}^-$ at the downcrossing instant, the overshoot $Z$ is the residual sojourn time until exit from $\mathcal{Z}^-$.
{That is, after time $\gamma_{\{-x\}}$, the RAP-modulated fluid process drains during the time interval $[\gamma_{\{-x\}},\gamma_{\{-x\}}+Z]$, after which the orbit jumps from $\mathcal{Z}^-$ to $\mathcal{Z}^+$.}
Thus, by \eqref{eq:survival-residual-simple},
\[\PP(Z > z \mid \bm{O}_x = \bm{a}) = \bm{a}e^{\bm{C}^- z}\bm{1}\implies -\frac{\dd}{\dd z}\PP(Z > z \mid \bm{O}_x = \bm{a}) = -\bm{a}e^{\bm{C}^- z}\bm{C}^-\bm{1} = \bm{a}e^{\bm{T} z}\bm{t},\] from which we obtain
\begin{equation*}
\mathbb{E}[\e^{-\Phi_q Z} \mid \bm{O}_x = \bm{a}]
= \int_0^\infty \e^{-\Phi_q z} \bm{a}e^{\bm{T} z}\bm{t} \,\dd z
= \bm{a} (\Phi_q \bm{I} - \bm{T})^{-1}\bm{t}.
\end{equation*}

Define
\begin{equation}\label{eq:nu_bv}
\bm{\nu} := (\Phi_q \bm I - \bm{T})^{-1}\bm{t}.
\end{equation}
Then $\mathbb{E}[\e^{-\Phi_q Z} \mid \bm{O}_x = \bm{a}] = \bm{a}\bm{\nu}$, and by $\{\bm{O}_x = \bm{a}\}=\{\bm{O}_x = \bm{a}, \gamma_{\{-x\}} < \infty\}$ and the tower property,
\begin{equation}\label{eqnBoundedVariationTauxProb}
\begin{split}
\PP(\tau_{\{-x\}} < e_q) 
&= \mathbb{E}\Big[\e^{-\Phi_q Z}\;\mathds{1}\{\gamma_{\{-x\}} < \infty\}\Big]\\
&= \mathbb{E}_{\bm A_0}\Big[\bm{O}_x \bm{\nu}\;\mathds{1}\{\gamma_{\{-x\}} < \infty\}\Big]\\
&= \mathbb{E}_{\bm A_0}\Big[\bm{O}_x\;\mathds{1}\{\gamma_{\{-x\}} < \infty\}\Big]\bm{\nu}.
\end{split}
\end{equation}Starting from $\bm{A}_0 = 1 \in \mathcal{Z}^+$, the downward record formula \eqref{eq:O_x_formula} gives $\mathbb{E}_{\bm A_0}[\bm{O}_x\;\mathds{1}\{\gamma_{\{-x\}} < \infty\}] = \bm{\Psi}\, e^{\bm{G}x}$, where $\bm{G} := \bm{C}^- + \bm{D}^{-+}\bm{\Psi}= \bm{T} + \bm{t}\,\bm{\Psi}$. Therefore,
\begin{equation}\label{eq:hitting_bv_final}
\PP(\tau_{\{-x\}} < e_q) = \bm{\Psi}\, e^{\bm{G}x}\,\bm{\nu}.
\end{equation}
Substituting into  \eqref{eq:Ivanovs_identity} yields \eqref{eq:scale_bv}. 
\end{proof}

In the phase-type case, $\bm{G} = \bm{T} + \bm{t}\,\bm{\Psi}$ has a probabilistic interpretation as a transition rate matrix and $\bm{\Psi}$ represents a minimal nonnegative solution to the Riccati equation. In the matrix-exponential case, these probabilistic properties are lost. The matrix $\bm{G}$ is no longer guaranteed to be a generator, and $\bm{\Psi}$ need not be nonnegative. Nevertheless, both $\bm{\Psi}$ and $\bm{G}$ retain their roles as matrices bridging expectations in the orbit framework: $\bm{\Psi}$ maps initial orbit distributions to expected orbit values at downcrossing, and $\bm{G}$ governs the propagation of these expectations across levels. The algebraic structure ensures that the combination $\bm{\Psi} e^{\bm{G}x}\bm{\nu}$ correctly evaluates to the hitting probability $\PP(\tau_{\{-x\}}<e_q)$, even when the individual components lack direct probabilistic interpretations. The orbit process formalism ensures that the Riccati equation encodes the correct hitting probabilities through the matrix identities \eqref{eq:O_x_formula_minus}--\eqref{eq:O_x_formula}.

\subsubsection{Iterative Algorithm and Limiting Equation}

\begin{proof}[Proof of second part of Theorem \ref{teoScaleFnBV}]
The Riccati equation \eqref{eq:riccati} involves only the matrices $\bm{C}^+$, $\bm{C}^-$, $\bm{D}^{+-}$, and $\bm{D}^{-+}$. The effect of $q$-killing is encoded entirely through termination from $\mathcal{Z}^+$, reflected in the normalization deficit $\bm{C}^+\bm{1} + \bm{D}^{+-}\bm{1} = -q/d < 0$. Since $\bm{\Psi}$ in \eqref{eq:psi_interpretation} is defined via the indicator $\mathds{1}\{\tau_- < \infty\}$, paths that terminate before downcrossing do not contribute to the computation of this matrix.
Substituting the parameters from \eqref{eq:params_bv_plus} into Sylvester's recursion \eqref{eq:psi_sylvester}:
\begin{equation}\label{eq:Sylvester_bv}
-\frac{\lambda + q}{d} \bm{\Psi}_n + \bm{\Psi}_n \bm{T} 
= -\frac{\lambda}{d}\bm{\alpha} - \bm{\Psi}_{n-1}\bm{t}\bm{\Psi}_{n-1},
\qquad n\ge 1,
\end{equation}
where $\bm{\Psi}_0 = \bm{0}$. Since the inverse of $\frac{\lambda + q}{d}\,\bm{I} - \bm{T}$ exists \cite[Theorem 4.1.6]{BladtNielsen2017}, we can isolate $\bm{\Psi}_n$ to obtain \eqref{eq:iteration_bv1}. The convergence $\bm{\Psi} = \lim \bm{\Psi}_n$ follows from \cite[Theorem 4.1]{BeanNguyenNielsenPeralta2022}.
\end{proof}

\begin{remark}\label{rem:bv_iteration_diff}
The iteration \eqref{eq:iteration_bv1} is akin to Equation (3) of \cite{bean2005algorithms}, while the analogous iteration obtained in \cite[Section 4.2]{Ivanovs2021} for phase-type jumps corresponds to Equation (5) of \cite{bean2005algorithms}. In particular, when restricted to the phase-type case, the convergence of both algorithms to the correct matrix follows from \cite{bean2005algorithms}. The reason our iteration differs from that of \cite[Section 4.2]{Ivanovs2021} is that only one algorithmic framework is currently available for RAP-modulated stochastic fluid models \cite{BeanNguyenNielsenPeralta2022}; the richer toolkit available for classical stochastic fluid models admits additional algorithmic variants that are not yet developed in the RAP-modulated setting.
\end{remark}

\subsection{Unbounded Variation Case: $\sigma > 0$}

When $\sigma > 0$, the Lévy process $X$ includes a Brownian component with drift $d$ and variance $\sigma^2$ between jumps. 
We embed such a L\'evy process into a RAP-modulated fluid process, using the setting of Section \ref{subsectionRAPMSFP}, in particular Example~\ref{ex:ME_renewal}. 
Our construction exploits the systematic correspondence between one-sided exit problems for Markov-modulated Brownian motion and those for stochastic fluid processes via Wiener-Hopf factorization of Brownian states, as established by \cite{NguyenPeralta2019}. In the phase-type case, Ivanovs embeds the killed Lévy process into a Markov-modulated Brownian motion (MMBM) and derives scale function representations through MMBM fluctuation theory. By applying the Wiener-Hopf technique of \cite{NguyenPeralta2019}, we can work directly in the RAP-modulated fluid process framework, which naturally accommodates matrix-exponential jump distributions without requiring separate analysis.

To study downcrossings, we decompose the process $X$ into stages, which are the intervals between consecutive jumps of the Poisson process $N$. Each stage is further decomposed into two subintervals as follows. Assume two consecutive jumps of $N$ occur at times $0<\tau_n<\tau_{n+1}$, and the overall infimum of the Brownian path on $[\tau_n,\tau_{n+1}]$ is attained at the unique point $S\in (\tau_n,\tau_{n+1})$. Uniqueness of $S$, and the fact that it is not attained at the endpoints, follow from \cite{MR0433606} (see also \cite[Theorem 2]{MR4130409}). The stage is then decomposed into a first subinterval $[\tau_n,S]$, corresponding to a single \emph{downward} Brownian excursion, and a second subinterval $(S,\tau_{n+1})$, corresponding to an \emph{upward} Brownian excursion. At time $\tau_{n+1}$ the process has a negative jump with matrix-exponential distribution. From the Wiener-Hopf factorization we obtain the joint law of the stage increments $X_{\tau_n}-X_S$ and $X_{\tau_{n+1}-}-X_S$. Equivalently, after recentering the stage at $X_{\tau_n}$, these are the downward and upward Wiener-Hopf factors over the interval $[\tau_n,\tau_{n+1}]$. We then embed such a stage into a stage of a RAP-modulated stochastic fluid process $\{(R_t, \bm{A}_t)\}_t$ with $\pm 1$ rates. The level process starts with a $-1$ slope during a time interval of length $X_{\tau_n}-X_S$, followed by a $+1$ slope during a time interval of length $X_{\tau_{n+1}-}-X_S$, and finally a $-1$ slope of length distributed as matrix-exponential. This matches in distribution the drop from the stage starting level to the stage minimum, the subsequent rise from the minimum to the pre-jump level, and the final jump size. See Figure~\ref{fig:embedding_ubv} for an illustration of the embedding.

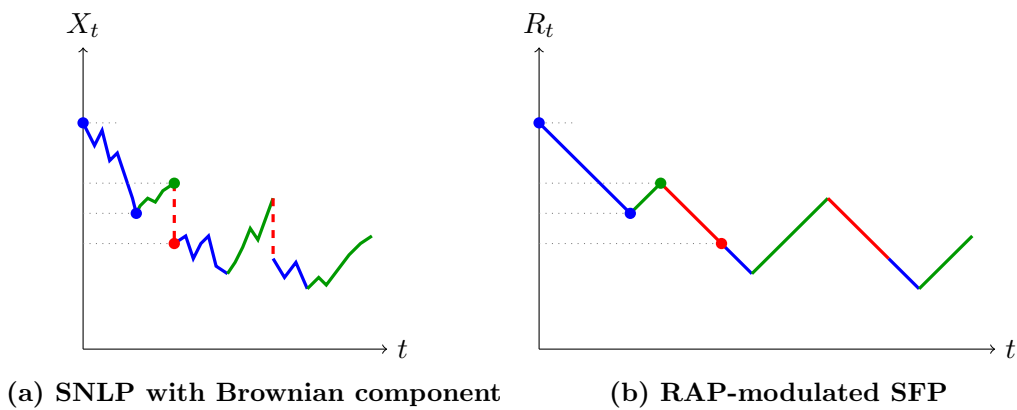
\begin{figure}[ht]
\centering
\begin{tikzpicture}[scale=1.0]

\draw[->] (0,0) -- (4,0) node[right]{$t$};
\draw[->] (0,0) -- (0,4) node[above]{$X_t$};

\def\startlevel{3.0}
\def\firstmin{1.8}
\def\firstend{2.2}
\def\firstjumpsize{0.8}
\def\afterfirstjump{1.4}  
\def\secondmin{1.0}
\def\secondend{2.0}
\def\secondjumpsize{.8}
\def\aftersecondjump{1.2}  

\draw[very thick, blue] 
  (0,\startlevel) -- (0.15,2.7) -- (0.25,2.9) -- (0.35,2.5) -- (0.45,2.6) 
  -- (0.55,2.3) -- (0.65,2.0) -- (0.7,\firstmin) ;
\draw[very thick, green!60!black] 
(0.7,\firstmin) -- (0.75,1.9) 
  -- (0.85,2.0) -- (0.95,1.95) -- (1.05,2.1) -- (1.2,\firstend);

\draw[very thick, red, dashed] (1.2,\firstend) -- (1.2,\afterfirstjump);

\draw[very thick, blue]
  (1.2,\afterfirstjump) -- (1.35,1.5) -- (1.45,1.2) -- (1.55,1.4)
  -- (1.65,1.5) -- (1.75,1.1) -- (1.9,\secondmin);
\draw[very thick, green!60!black] 
(1.9,\secondmin) -- (2.0,1.15)
  -- (2.1,1.35) -- (2.2,1.6) -- (2.3,1.45) -- (2.5,\secondend);

\draw[very thick, red, dashed] (2.5,\secondend) -- (2.5,\aftersecondjump);

\draw[very thick, blue]
  (2.5,\aftersecondjump) -- (2.65,0.95) -- (2.8,1.15) -- (2.95,0.8);
\draw[very thick, green!60!black] 
  (2.95,0.8)-- (3.1,0.95) -- (3.2,0.85) -- (3.35,1.05) -- (3.5,1.25) 
  -- (3.65,1.4) -- (3.8,1.5);

\draw[dotted, gray] (0,\startlevel) -- (0.5,\startlevel);
\node at (0,\startlevel)[circle,fill,inner sep=1.5pt, blue]{};
\draw[dotted, gray] (0,\firstmin) -- (0.7,\firstmin);
\node at (0.7,\firstmin)[circle,fill,inner sep=1.5pt, blue]{};
\draw[dotted, gray] (0,\firstend) -- (1.2,\firstend);
\node at (1.2,\firstend)[circle,fill,inner sep=1.5pt, green!60!black]{};
 \draw[dotted, gray] (0,\afterfirstjump) -- (1.2,\afterfirstjump);
\node at (1.2,\afterfirstjump)[circle,fill,inner sep=1.5pt, red]{};

\node at (2.25,-0.6) {\small\textbf{(a) SNLP with Brownian component}};

\begin{scope}[xshift=6cm]
\draw[->] (0,0) -- (6,0) node[right]{$t$};
\draw[->] (0,0) -- (0,4) node[above]{$R_t$};

\draw[very thick, blue] (0,\startlevel) -- (1.2,\firstmin);
\draw[very thick, green!60!black] (1.2,\firstmin) -- (1.6,\firstend);
\draw[very thick, red] (1.6,\firstend) -- (2.4,\afterfirstjump);

\draw[very thick, blue] (2.4,\afterfirstjump) -- (2.8,\secondmin);
\draw[very thick, green!60!black] (2.8,\secondmin) -- (3.8,\secondend);
\draw[very thick, red] (3.8,\secondend) -- (4.6,\aftersecondjump);

\draw[very thick, blue] (4.6,\aftersecondjump) -- (5,0.8);
\draw[very thick, green!60!black] (5,0.8) -- (5.7,1.5);

\draw[dotted, gray] (0,\startlevel) -- (0.5,\startlevel);
\draw[dotted, gray] (0,\firstend) -- (1.6,\firstend);
\draw[dotted, gray] (0,\firstmin) -- (1.2,\firstmin);
\node at (1.2,\firstmin)[circle,fill,inner sep=1.5pt, blue]{};
\draw[dotted, gray] (0,\afterfirstjump) -- (2.4,\afterfirstjump);
\node at (0,\startlevel)[circle,fill,inner sep=1.5pt, blue]{};
\node at (1.6,\firstend)[circle,fill,inner sep=1.5pt, green!60!black]{};
\node at (2.4,\afterfirstjump)[circle,fill,inner sep=1.5pt, red]{};

\node at (3.15,-0.6) {\small\textbf{(b) RAP-modulated SFP}};
\end{scope}

\end{tikzpicture}
\caption{Embedding of a spectrally negative Lévy process with Brownian component into a RAP-modulated fluid process. In (a), we show the path from the initial point up to hitting the infimum, the path from hitting the infimum up to just before the downward jump, and the downward jump. In (b), we highlight the first stage of the RAP-modulated fluid process: starting with a downward phase ($\mathcal{Z}_1^-$, orange), then an upward phase ($\mathcal{Z}_1^+$, green), while jumps unfold into descending sojourns ($\mathcal{Z}_2^-$, red). Dotted lines and dots  represent the first stage of each process, highlighting that at those levels both processes take the same value.
}
\label{fig:embedding_ubv}
\end{figure}

\subsubsection{Wiener-Hopf Factorization and Embedding}

The following result provides us with the law of the Brownian motion at the appropriate times discussed above. 
Its proof can be found in \cite{BladtNielsen2017}. 

\begin{theorem}[Wiener-Hopf factorisation for Brownian motion]
Let $\{W_t\}_{t\geq 0}$ be a Brownian motion with variance $\sigma^2>0$, drift $d\in \mathbb{R}$ and starting point $W_0=0$. 
Let $U$ be a stopping time, and for fixed $\lambda+q > 0$ let $E\sim {\rm Exp}(\lambda+q)$, independent of $\{W_t\}_{t\geq 0}$. 
Then, $W_U-\min_{t\in [0,E]}W_{U+t}$ and $W_{U+E}-\min_{t\in [0,E]}W_{U+t}$ are independent and exponentially distributed random variables with respective rates
\begin{equation}\label{eq:wiener_hopf_factors}
\omega_{\lambda+q} = \frac{\sqrt{d^2 + 2\sigma^2 (\lambda+q)} + d}{\sigma^2}, \qquad \eta_{\lambda+q} = \frac{\sqrt{d^2 + 2\sigma^2 (\lambda+q)} - d}{\sigma^2}.
\end{equation}
\end{theorem}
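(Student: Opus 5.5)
The plan is to first remove the stopping time $U$, then identify the two marginal laws through the roots of the Laplace exponent of $W$, and finally obtain independence from the general Wiener--Hopf factorisation at an independent exponential time.

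\textbf{Step 1: reduction to $U=0$.} By the strong Markov property of Brownian motion at $U$, the process $\{W_{U+t}-W_U\}_{t\ge 0}$ is again a Brownian motion with drift $d$ and variance $\sigma^2$ started at $0$. It is independent of $\mathcal{F}_U$, and also of $E$, which is independent of everything. The two random variables in the statement are exactly $-\underline{W}'_E$ and $W'_E-\underline{W}'_E$ for this new Brownian motion $W'$, where $\underline{W}'_E:=\min_{t\in[0,E]}W'_t$. So it suffices to treat $U=0$.

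\textbf{Step 2: the marginal laws.} The Laplace exponent of $W$ is $\psi_W(\theta)=d\theta+\tfrac{\sigma^2}{2}\theta^2$. A direct quadratic computation shows that the equation $\psi_W(\theta)=\lambda+q$ has roots $\theta=\eta_{\lambda+q}>0$ and $\theta=-\omega_{\lambda+q}<0$.

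For the minimum, apply optional stopping to the martingale $\exp\{-\omega_{\lambda+q}W_t-(\lambda+q)t\}$ at the first passage time $\tau_{-x}$ below $-x$. Continuity of paths gives no overshoot, so
\[
\EE\big[\e^{-(\lambda+q)\tau_{-x}}\big]=\e^{-\omega_{\lambda+q}x}.
\]
Hence
\[
\PP\big(-\underline{W}_E>x\big)=\PP(\tau_{-x}<E)=\e^{-\omega_{\lambda+q}x},
\]
so $-\underline{W}_E\sim\mathrm{Exp}(\omega_{\lambda+q})$. The symmetric argument with the martingale $\exp\{\eta_{\lambda+q}W_t-(\lambda+q)t\}$ shows that the supremum satisfies $\overline{W}_E\sim\mathrm{Exp}(\eta_{\lambda+q})$.

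Next, use time reversal. The process $\{W_E-W_{(E-t)-}\}_{t\in[0,E]}$ has the same law as $\{W_t\}_{t\in[0,E]}$, since $E$ is independent and Brownian increments are stationary and exchangeable in time. This gives $W_E-\underline{W}_E\overset{d}{=}\overline{W}_E\sim\mathrm{Exp}(\eta_{\lambda+q})$.

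\textbf{Step 3: independence (the main obstacle).} Here I would invoke the Wiener--Hopf factorisation of a L\'evy process at an independent exponential time (e.g. \cite[Theorem 6.16]{Kyprianou2014}). It states that $(G_E,\underline{W}_E)$ and $(E-G_E,W_E-\underline{W}_E)$ are independent, where $G_E$ is the a.s.\ unique time of the minimum on $[0,E]$. The underlying argument splits the excursions from the running minimum via a Poisson thinning at rate $\lambda+q$.

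As a consistency check, the characteristic functions factor as
\[
\frac{\lambda+q}{\lambda+q-\psi_W(iu)}=\frac{\omega_{\lambda+q}}{\omega_{\lambda+q}+iu}\cdot\frac{\eta_{\lambda+q}}{\eta_{\lambda+q}-iu}.
\]
This follows from writing $\lambda+q-\psi_W(\theta)=-\tfrac{\sigma^2}{2}(\theta-\eta_{\lambda+q})(\theta+\omega_{\lambda+q})$ and noting that $\tfrac{\sigma^2}{2}\eta_{\lambda+q}\omega_{\lambda+q}=\lambda+q$. The factorisation agrees with $W_E=(W_E-\underline{W}_E)+\underline{W}_E$ being a sum of independent $\mathrm{Exp}(\eta_{\lambda+q})$ and $-\mathrm{Exp}(\omega_{\lambda+q})$ variables.

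Independence is the only nontrivial point. If a self-contained proof is wanted instead of the citation, I would discretise via excursion theory of $W-\underline{W}$. The killing clock falls into a given excursion independently of the pre-excursion minimum, which yields the product form of the joint Laplace transform.
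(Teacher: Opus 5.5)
Your argument is correct. The paper gives no proof of this statement and only refers to \cite{BladtNielsen2017}, so your proposal is more explicit than the source. The strong Markov reduction to $U=0$ is sound. So are the identification of the rates as the roots $\eta_{\lambda+q}$ and $-\omega_{\lambda+q}$ of $d\theta+\frac{\sigma^2}{2}\theta^2=\lambda+q$ via bounded optional stopping, and the time reversal for the upward factor. For the reduction you should say explicitly that $U<\infty$ a.s.\ and that $E$ is independent of $\mathcal{F}_U$ and of the post-$U$ increments; that is what you use when you call $E$ ``independent of everything'', and it holds in the paper's application with $U=\tau_n$.

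Where your route differs is in treating independence as the hard step and importing it from the general L\'evy Wiener--Hopf factorisation. For Brownian motion this is unnecessary. Let $T_{-x}$ be the first hitting time of $-x$, for $x>0$. On $\{T_{-x}<E\}$, the strong Markov property and the memorylessness of $E$ make the shifted pair $(W_{T_{-x}+\cdot}+x,\;E-T_{-x})$ independent of $\mathcal{F}_{T_{-x}}$ with the law of $(W,E)$. Because paths are continuous, there is no overshoot, and this shifted pair reproduces $-\underline{W}_E-x$ and $W_E-\underline{W}_E$ exactly. Hence, up to the null event $\{T_{-x}=E\}$,
$\PP(-\underline{W}_E\ge x,\;W_E-\underline{W}_E>y)=\PP(T_{-x}<E)\,\PP(W_E-\underline{W}_E>y)=\e^{-\omega_{\lambda+q}x}\,\PP(W_E-\underline{W}_E>y)$.
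This gives the $\mathrm{Exp}(\omega_{\lambda+q})$ law and the independence in one stroke.

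Once independence is known, your characteristic-function factorisation is more than a consistency check. Dividing by $\omega_{\lambda+q}/(\omega_{\lambda+q}+iu)$ identifies the law of $W_E-\underline{W}_E$ as $\mathrm{Exp}(\eta_{\lambda+q})$ without needing time reversal.

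The citation route buys generality: it covers any non-compound-Poisson L\'evy process and also gives the joint law with the time $G_E$ of the minimum. The hitting-time argument is fully self-contained, using only the strong Markov property and the absence of overshoot.
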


Employing the Wiener-Hopf factorization, we embed the killed process $X$ into a RAP-modulated stochastic fluid process with state space $\mathcal{Z} = \mathcal{Z}^- \cup \mathcal{Z}^+$, where:
\begin{itemize}
\item $\mathcal{Z}^- = \mathcal{Z}_1^- \cup \mathcal{Z}_2^-$ decomposes into:
\begin{itemize}
\item[1)] $\mathcal{Z}_1^-$: downward Brownian excursion phase (level decreases at rate $-1$)
\item[2)] $\mathcal{Z}_2^-$: ME jump drain phase (level decreases at rate $-1$)
\end{itemize}
\item $\mathcal{Z}^+ = \mathcal{Z}_1^+$: upward Brownian excursion phase (level increases at rate $+1$)
\end{itemize}

The RAP-modulated fluid process operates in stages as follows (see Figure~\ref{fig:embedding_ubv}):\footnote{I added a ton of detail. Feel free to cut wherever seems necessary.}
\begin{enumerate}
\item \textbf{Downward Brownian excursion}: Starting from $\mathcal{Z}_1^-$, the level decreases at rate $-1$ for a duration $\mathrm{Exp}(\omega_{\lambda+q})$. 
This segment should be interpreted as the Wiener-Hopf \emph{downward factor} over the exponential horizon $E\sim\mathrm{Exp}(\lambda+q)$, namely the total drop from the current level to the running minimum attained before the Brownian stage ends. 
\item \textbf{Upward Brownian excursion}: The orbit transitions to $\mathcal{Z}_1^+$, and the level then increases at rate $+1$ for a duration $\mathrm{Exp}(\eta_{\lambda+q})$.
Likewise, this segment represents the Wiener-Hopf \emph{upward factor}, that is, the amount by which the terminal Brownian position at the end of the stage lies above that running minimum. 
\item \textbf{Jump or termination}: At the end of the upward excursion:
\begin{itemize}
\item With probability $\lambda/(\lambda + q)$, a negative jump occurs: the orbit enters $\mathcal{Z}_2^-$ with initial distribution $\bm{\alpha}$, and the level decreases at rate $-1$ for a duration $\mathrm{ME}(\bm{\alpha}, \bm{T})$, unfolding the jump size. Upon exit from $\mathcal{Z}_2^-$, the orbit returns to $\mathcal{Z}_1^-$ and a new stage begins.
\item With probability $q/(\lambda + q)$, the process terminates (representing the $q$-killing).
\end{itemize}
\end{enumerate}

We now explain how those probabilities of either jump or termination, map precisely the $q$-killing of the L\'evy process.
The key point is that the RAP encodes the termination at the `end' of a Brownian stage, not the corresponding time at which killing occurs in the L\'evy process. 
Since only the law of the completed stage matters for the matrix-analytic representation, we are free to place that Bernoulli decision at the exit from $\mathcal{Z}_1^+$: conditionally on the Brownian stage ending at an $\mathrm{Exp}(\lambda+q)$-distributed time, the event that ended the stage is a jump with probability $\lambda/(\lambda+q)$ and a killing with probability $q/(\lambda+q)$.
By the Wiener-Hopf factorization, that one-stage law is completely described by: a downward factor with rate $\omega_{\lambda+q}$,  an upward factor with rate $\eta_{\lambda+q}$, together with a Bernoulli mark telling us whether the stage ended because either the jump clock or the killing clock rang first. 
The latter and the properties of the geometric distribution, justify the given probabilities of termination.

The embedding can be viewed as an instance of Example~\ref{ex:ME_renewal} with $n_+=1$, $n_-=2$, and the following parameters. 
By the above discussion, the transition probabilities are
\[
p^{-+}_{11} = 1, \qquad p^{+-}_{12} = \frac{\lambda}{\lambda+q}, \qquad p^{--}_{21} = 1,
\]
with all other transition probabilities zero. The first equality reflects that every downward excursion is necessarily followed by an upward one (no termination during $\mathcal{Z}_1^-$); the second encodes the $q$-killing during the upward phase; and the third reflects that every matrix-exponential drain phase is preceded by a complete upward excursion (no termination during $\mathcal{Z}_2^-$). The initial orbit values are $\bm{\alpha}_1^+ = 1$, $\bm{\alpha}_1^- = 1$, and $\bm{\alpha}_2^- = \bm{\alpha}$.

The orbit parameters are, in block form with rows and columns ordered as $(\mathcal{Z}_1^-, \mathcal{Z}_2^-)$ within $\mathcal{Z}^-$:
\begin{equation}\label{eq:params_ubv}
\bm{C}^- = \begin{pmatrix} -\omega_{\lambda+q} & \bm{0} \\ \bm{t} & \bm{T} \end{pmatrix}, \qquad \bm{C}^+ = -\eta_{\lambda+q},
\end{equation}
\begin{equation}\label{eq:params_ubv1}
\bm{D}^{-+} = \begin{pmatrix} \omega_{\lambda+q} \\ \bm{0} \end{pmatrix}, \qquad \bm{D}^{+-} = \begin{pmatrix} 0 & \eta_{\lambda+q}\dfrac{\lambda}{\lambda+q}\,\bm{\alpha} \end{pmatrix},
\end{equation}
where $\bm{t} := -\bm{T}\bm{1}$.

\begin{remark}
When $q>0$, the normalization condition for $\mathcal{Z}^+$ holds with strict inequality: $C^+\bm{1} + \bm{D}^{+-}\bm{1} = -\eta_{\lambda+q}q/(\lambda+q) < 0$, with the deficit representing termination at rate $\eta_{\lambda+q}q/(\lambda+q)$ from $\mathcal{Z}^+$. When $q=0$ there is no killing, the normalization for $\mathcal{Z}^+$ holds with equality, and the orbit process is non-terminating. In both cases, for $\mathcal{Z}_1^-$ the normalization holds with equality: $-\omega_{\lambda+q} + \omega_{\lambda+q} = 0$, and for $\mathcal{Z}_2^-$ it also holds with equality: $\bm{T}\bm{1} + \bm{t} = \bm{0}$. As explained earlier, termination is absorbed entirely into $\mathcal{Z}^+$: the $q$-killing acts on the combined Brownian stage rather than on either excursion individually, so no termination occurs from $\mathcal{Z}_1^-$ or $\mathcal{Z}_2^-$. This implements the $q$-killing via the termination mechanism of Section~\ref{subsectionRAPMSFP}, avoiding the need for an explicit cemetery state.
\end{remark}

\subsubsection{Scale Function Representation}

\begin{proof}[Proof of first part of Theorem \ref{teoScaleFnUV}]
Let $\bm{e}_1^-$ denote the unit column vector of dimension $p+1$ with one in the first entry, so that $(\bm{e}_1^-)^\intercal$ selects the initial Brownian downward state. Under the block ordering $(\mathcal{Z}_1^-,\mathcal{Z}_2^-)$, the region $\mathcal{Z}_1^-$ is represented by the singleton state $\{(\bm{e}_1^-)^\intercal\}$, whereas $\mathcal{Z}_2^-$ consists of a subset of all row vectors of the form $(0,\bm{a})$ with $\bm{a}\in\mathbb{R}^{1\times p}$ and $\bm{a}\bm{1}=1$. Thus the first coordinate corresponds to the Brownian downward phase, and the remaining $p$ coordinates correspond to the matrix-exponential drain phase.
Note that $\{\tau^-_{\{-x\}} < e_q\} = \{\gamma_{\{-x\}} < \infty\}$ in the RAP-modulated embedding, since the $q$-killing is encoded as termination from $\mathcal{Z}^+$: the orbit reaches level $-x$ if and only if the stage when it terminates happens after reaching $-x$, and the latter happens if and only if the excursion where the L\'evy process is killed occurs after hitting $(-\infty,-x)$. 
This equality is realized under the stagewise coupling induced by the Wiener-Hopf decomposition. 

We use \eqref{eq:hitting_factor} and decompose on the events $\{Z=0\}$ and $\{Z>0\}$, obtaining
\[
\PP(\tau_{\{-x\}} < e_q) = \mathbb{P}\big(\tau^-_{\{-x\}} < e_q,Z=0\big)+\mathbb{E}\Big[\e^{-\Phi_q Z}\;\mathds{1}\big\{\tau^-_{\{-x\}} < e_q,Z>0\big\}\Big].
\]
When $Z=0$ the process hits $-x$ by a downward Brownian excursion, so $\mathds{1}\{Z=0\}=\bm O_{x}\bm{e}_1^-$, and thus
\[
\mathbb{P}\big(\tau^-_{\{-x\}} < e_q,Z=0\big)=\mathbb{E}_{(\bm{e}_1^-)^\intercal}\big[\bm O_x\mathds{1}\{\gamma_{\{-x\}}<\infty\}\big]\bm{e}_1^-.
\]
From \eqref{eq:O_x_formula_minus}, we obtain
\[
\mathbb{P}\big(\tau^-_{\{-x\}} < e_q,Z=0\big)=(\bm{e}_1^-)^\intercal e^{\bm{G}x}\bm{e}_1^-,
\]
where $\bm{G}=\bm{C}^- + \bm{D}^{-+}\bm{\Psi}$.

On the other hand, $\{\tau^-_{\{-x\}} < e_q, Z>0\} = \{\gamma_{\{-x\}}<\infty, \bm{O}_x\in \mathcal{Z}^-_2\}$. Given $\bm{O}_x = (0,\bm{a}) \in \mathcal{Z}_2^-$ with $\bm{a} \in \mathbb{R}^{1\times p}$, the overshoot $Z$ is the residual sojourn time in $\mathcal{Z}_2^-$. Since $\bm{C}_{22}^- = \bm{T}$, Equation \eqref{eq:residual_sojourn_component} gives the survival function $\bm{a}e^{\bm{T}z}\bm{1}$. By the same computation as in the bounded variation proof \eqref{eqnBoundedVariationTauxProb},
\[
\mathbb{E}_{(\bm{e}_1^-)^\intercal}\big[\e^{-\Phi_q Z} \mid \bm{O}_x = (0,\bm{a})\big] = \bm{a}(\Phi_q\bm{I}-\bm{T})^{-1}\bm{t} = \bm{a}\,\bm{\nu}.
\]
To express this in terms of the full $(p+1)$-dimensional orbit vector $\bm{O}_x = (0,\bm{a})$, we use the identity matrix and the block extraction
\[
\bm{a} = (0,\bm{a})\begin{pmatrix}\bm{0}\\\bm{I}_p\end{pmatrix},
\]
so that
\[
\mathbb{E}_{(\bm{e}_1^-)^\intercal}\big[\e^{-\Phi_q Z} \mid \bm{O}_x=(0,\bm{a})\big] = (0,\bm{a})\begin{pmatrix}\bm{0}\\\bm{I}_p\end{pmatrix}\bm{\nu} = (0,\bm{a})\begin{pmatrix}0\\\bm{\nu}\end{pmatrix}.
\]
Thus,
\[
\mathbb{E}\Big[\e^{-\Phi_q Z}\;\mathds{1}\big\{\tau^-_{\{-x\}} < e_q,Z>0\big\}\Big] = \mathbb{E}_{(\bm{e}_1^-)^\intercal}\Big[\bm{O}_x\begin{pmatrix}0\\\bm{\nu}\end{pmatrix}\mathds{1}\{\gamma_{\{-x\}} < \infty, \bm{O}_x\in\mathcal{Z}_2^-\}\Big].
\]
Since on the event $\{\bm{O}_x\in\mathcal{Z}_2^-\}$ we have $\bm{O}_x=(0,\bm{a})$ with $\bm{a}\bm{1}=1$, and on the complementary event $\{\bm{O}_x\in\mathcal{Z}_1^-\}$ we have $\bm{O}_x\begin{pmatrix}0\\\bm{\nu}\end{pmatrix}=0$, the indicator $\mathds{1}\{\bm{O}_x\in\mathcal{Z}_2^-\}$ can be dropped and we may write
\[
\mathbb{E}\Big[\e^{-\Phi_q Z}\;\mathds{1}\big\{\tau^-_{\{-x\}} < e_q,Z>0\big\}\Big] = \mathbb{E}_{(\bm{e}_1^-)^\intercal}\Big[\bm{O}_x\,\mathds{1}\{\gamma_{\{-x\}} < \infty\}\Big]\begin{pmatrix}0\\\bm{\nu}\end{pmatrix} = (\bm{e}_1^-)^\intercal e^{\bm{G}x}\begin{pmatrix}0\\\bm{\nu}\end{pmatrix}.
\]
Combining both cases and setting $\bm{V}:=\begin{pmatrix}1\\\bm{\nu}\end{pmatrix}\in\mathbb{R}^{p+1}$, we obtain
\begin{equation}\label{eq:hitting_ubv}
\PP(\tau_{\{-x\}} < e_q) = (\bm{e}_1^-)^\intercal e^{\bm{G}x}\bm{V}, \qquad x \ge 0.
\end{equation}
The latter and \eqref{eq:Ivanovs_identity} imply \eqref{eq:scale_ubv}.
\end{proof}

\subsubsection{Iterative Algorithm and Limiting Equations}

\begin{proof}[Proof of second part of Theorem \ref{teoScaleFnUV}]
Substituting the parameters \eqref{eq:params_ubv} into the Sylvester recursion \eqref{eq:psi_sylvester}, write $\bm{\Psi}_n=(\psi_n,\bm{\pi}_n)$ with $\bm{\pi}_n\in\mathbb{R}^{1\times p}$. The two blocks yield the coupled system
\begin{equation}\label{eq:sylvester_ubv_scalar1}
\omega_{\lambda+q}\psi^2_{n-1}-(\omega_{\lambda+q}+\eta_{\lambda+q})\psi_n+\bm \pi_n\bm t=0,
\end{equation}
\begin{equation}\label{eq:sylvester_ubv_vector1}
\bm{\pi}_n\,(\eta_{\lambda+q} \bm{I}-\bm{T}) = \eta_{\lambda+q}\frac{\lambda}{\lambda+q}\,\bm{\alpha} + \psi_{n-1}\omega_{\lambda+q}\,\bm{\pi}_{n-1}.
\end{equation}
Set
\begin{equation}\label{eq:ab_def1}
a_n:=\omega_{\lambda+q}(1-\psi_n),\qquad \bm{b}_n:=\omega_{\lambda+q}\bm{\pi}_n.
\end{equation}

By isolating $\bm{\pi}_n$ and $\psi_n$, then $\bm{b}_n$ can be updated sequentially from the previous iterate $(a_{n-1},\bm{b}_{n-1})$, whereas $a_n$ depends on $(a_{n-1},\bm{b}_{n})$. The latter yields the true Sylvester recursions \eqref{eq:b_recursion} and \eqref{eq:a_recursion} after substituting the Wiener-Hopf identities
\begin{equation}\label{eq:wh_identities}
\omega_{\lambda+q} - \eta_{\lambda+q} = \frac{2d}{\sigma^2}, \quad
\omega_{\lambda+q}\eta_{\lambda+q} = \frac{2(\lambda+q)}{\sigma^2}, \quad \omega_{\lambda+q} + \eta_{\lambda+q} = \frac{2\sqrt{d^2+2\sigma^2(\lambda+q)}}{\sigma^2}.
\end{equation}
Since $\bm{\Psi}_n \to \bm{\Psi}$ entrywise by \cite[Corollary 4.2]{BeanNguyenNielsenPeralta2022}, the pair $(a_n, \bm{b}_n)$ converges to some $(a, \bm{b})$. Monotone convergence cannot be expected in general, as the entries of $\bm{\Psi}_n$ need not be nonnegative or ordered.

Computing $\bm{G} = \bm{C}^- + \bm{D}^{-+}\bm{\Psi}$ with the block structure from \eqref{eq:params_ubv}, we obtain:
\begin{equation}\label{eq:G_ubv}
\bm{G} = \begin{pmatrix}-\omega_{\lambda+q} & \bm{0} \\ \bm{t} & \bm{T}\end{pmatrix} + \begin{pmatrix}\omega_{\lambda+q} \\ \bm{0}\end{pmatrix}\begin{pmatrix}\psi & \bm{\pi}\end{pmatrix} = \begin{pmatrix}-a & \bm{b} \\ \bm{t} & \bm{T}\end{pmatrix}.
\end{equation}
Partitioning $\bm{\Psi} = \begin{pmatrix}\psi & \bm{\pi}\end{pmatrix}$ with $\psi \in \mathbb{R}$ and $\bm{\pi} \in \mathbb{R}^{1 \times p}$, and defining
\begin{equation}\label{eq:ab_def}
a := \omega_{\lambda+q}(1-\psi), \qquad \bm{b} := \omega_{\lambda+q}\bm{\pi},
\end{equation}
substituting \eqref{eq:params_ubv}-\eqref{eq:params_ubv1} into \eqref{eq:riccati}, 
and using the limit of Equation \eqref{eq:sylvester_ubv_scalar1},  the scalar component becomes
\[
\begin{split}
\bm b\bm t& =\bm \pi \bm t\omega_{\lambda+q}\\
&= (-\omega_{\lambda+q}\psi+\omega_{\lambda+q}+\eta_{\lambda+q})\omega_{\lambda+q}\psi\\
&= (\omega_{\lambda+q}(1-\psi)+\eta_{\lambda+q})\omega_{\lambda+q}\psi\\
& = (a+\eta_{\lambda+q})(\omega_{\lambda+q}-a).
\end{split}
\]
Using \eqref{eq:wh_identities} and multiplying by $-\sigma^2$, the latter becomes the quadratic equation \eqref{eq:ivanovs_scalar}. Equation \eqref{eq:ivanovs_vector} is obtained from the limiting form of \eqref{eq:sylvester_ubv_vector1} by using again the first identity in \eqref{eq:wh_identities}.

%
%
The limit determines $\bm{\Psi}$ and $\bm{G}$ via \eqref{eq:Psi_from_ab} and \eqref{eq:G_ubv}, completing the computation of the scale function \eqref{eq:scale_ubv}.
\end{proof}

As in the bounded variation case, the matrix $\bm{G}$ loses its probabilistic interpretation when the jump distribution is matrix-exponential but not phase-type: $a$ need not be positive and $\bm{b}$ need not have nonnegative entries. Nevertheless, $\bm{\Psi}$ and $\bm{G}$ retain their algebraic roles in the orbit framework, and the combination $(\bm{e}_1^-)^\intercal e^{\bm{G}x}\bm{V}$ correctly evaluates to $\PP(\tau_{\{-x\}}<e_q)$. The extension from phase-type to matrix-exponential preserves the fundamental algebraic structure: the orbit process framework ensures that the same quadratic form arises, with $(\bm{\alpha}, \bm{T})$ entering in the same positions as in the phase-type case.

\section{Conclusion}\label{sec:conclusion}

We have extended the scale function representation of \cite{Ivanovs2021} from spectrally negative Lévy processes with phase-type jumps to those with general matrix-exponential jumps. The extension is non-trivial: the probabilistic arguments employed by Ivanovs rely fundamentally on the Markov-modulated Brownian motion framework, which requires the jump distribution to admit a phase-type representation with an underlying continuous-time Markov chain. Matrix-exponential distributions that are not phase-type lack this Markovian structure, and several convenient probabilistic interpretations are lost during the algebraic ME-ification.

The key to overcoming this obstacle is the orbit process framework introduced by \cite{BeanNguyenNielsenPeralta2022}. By embedding the Lévy process into a RAP-modulated stochastic fluid process driven by an orbit process rather than a Markov chain, we accommodate the algebraic generality of matrix-exponential distributions. A crucial technical insight is the systematic correspondence between one-sided exit problems for Markov-modulated Brownian motion and those for stochastic fluid processes via Wiener-Hopf factorization of Brownian states, established by \cite{NguyenPeralta2019}. This technique allows results derived in the Brownian setting with rational jumps to be translated directly to the RAP-modulated fluid process context, avoiding the need for separate analysis and enabling our matrix-exponential extension. The Riccati equations governing first passage probabilities in RAP-modulated fluid processes reduce to the same quadratic matrix equations derived by Ivanovs, without requiring limiting approximations or probabilistic interpretations.

Our construction accommodates terminating orbit processes through relaxed normalization conditions, implementing killing mechanisms via substochastic transitions rather than explicit cemetery states. While \cite{BeanNguyenNielsenPeralta2022} emphasized non-terminating RAP-modulated fluid processes, we use their analytical framework to extend naturally to the terminating case, as the relevant expectations condition on downcrossing events that precede termination. This flexibility is essential for modeling killed Lévy processes and enriches the applicability of the orbit framework.

Our work contributes to the broader effort of developing the analytically challenging matrix-exponential toolkit. Matrix-exponential distributions provide a dense class of distributions on the positive half-line with tractable transforms, but their non-Markovian nature presents obstacles for many standard techniques. Orbit processes and RAP-modulated stochastic fluid processes offer a systematic framework for handling matrix-exponential structures, extending results from the phase-type literature while preserving computational feasibility. The iterative algorithms we derive retain explicit matrix-analytic recursions with convergence guaranteed by the RAP-modulated fluid framework, and in the phase-type case they recover the same fixed-point equations as Ivanovs's approach. This demonstrates that the algebraic structure underlying these methods is robust to the matrix-exponential generalization.

We demonstrate here the utility of orbit representations. While orbit processes sacrifice some of the intuitive probabilistic content of Markov chains, they retain the essential algebraic properties needed for fluctuation theory and other analytical applications. As the matrix-exponential toolkit continues to develop, orbit processes are likely to play an increasingly central role, providing the natural setting for generalizations beyond the phase-type regime. 

Future work may extend these techniques to related Lévy process models. Since matrix-exponential distributions are precisely equivalent to distributions with rational Laplace transforms, our results exhaust the class of spectrally negative Lévy processes admitting rational characteristics. Natural directions include extension to two-sided Lévy processes with matrix-exponential jumps in both directions, though such generalizations would likely not yield the same clean matrix-exponential representations for scale functions. The orbit framework can also accommodate infinite-dimensional phase-type distributions, which are relevant for heavy-tailed models. The terminating orbit framework developed here may prove useful in other contexts requiring absorption or killing mechanisms. These extensions will require additional technical development but follow the same conceptual approach: replacing Markovian structures with orbit dynamics facilitates the analysis while also preserves the underlying algebraic relationships.

\bibliography{references}
\bibliographystyle{alpha}

\end{document}